\newtheorem{thm}{Theorem}[section]
\newtheorem{cor}[thm]{Corollary}
\newtheorem{lem}[thm]{Lemma}
\newtheorem{prop}[thm]{Proposition}
\theoremstyle{definition}
\numberwithin{equation}{section}
\newcommand{\braces}[1]{{\rm (}#1{\rm )}}
\newcommand{\Id}{\operatorname{Id}}
\newcommand{\Fourier}{\mathcal{F}}
\newcommand{\WeightedBessel}{L_{w,\Lambda}^2(\R^d)}
\newcommand{\NormWeightedBessel}[1]{\| #1 \|_{L_{w,\Lambda}^2}}
\newcommand{\R}{\ensuremath{\mathbb R}}    
\newcommand{\CC}{\ensuremath{\mathbb C}}   
\newcommand{\N}{\ensuremath{\mathbb N}}    
\newcommand{\Z}{\ensuremath{\mathbb Z}}    
\newcommand{\iso}{\circ}
\newcommand{\<}{\langle}
\renewcommand{\>}{\rangle}
\newcommand{\calB}{\mathcal B}
\newcommand{\calF}{\mathcal F}         
\newcommand{\calH}{\mathcal H}
\newcommand{\calS}{\mathcal S}
\newcommand{\la}{\lambda}
\newcommand{\veps}{\varepsilon}
\newcommand{\eps}{\varepsilon}
\newcommand{\mat}[4]
{
   \begin{pmatrix}
      #1 & #2\\
      #3 & #4
   \end{pmatrix}
}
\renewcommand{\Im}{\operatorname{Im}}
\renewcommand{\Re}{\operatorname{Re}}
\newcommand{\linspan}{\operatorname{span}}
\newcommand{\ran}{\operatorname{ran}}
\newcommand{\sap}{\sigma_{{ap}}}
\newcommand{\downto}{\downarrow}
\newcommand{\ol}{\overline}
\newcommand{\wh}{\widehat}
\newcommand{\supp}{\operatorname{supp}}
\newcommand{\La}{\Lambda}
\newcommand{\HH}{\mathbb{H}}
\newcommand{\del}{\partial}
\newcommand{\topp}{{\!\top}}
\newcommand{\Vol}{\operatorname{Vol}}
\begin{document}

\title[Invertibility of the Gabor frame operator on certain modulation spaces]
      {A note on the invertibility of the Gabor frame operator on certain modulation spaces}


\author[D.G. Lee]{Dae Gwan Lee}
\address{{\bf D.G.~Lee:} KU Eichst\"att--Ingolstadt, Mathematisch--Geographische Fakult\"at,
Os\-ten\-stra\-\ss{}e~26, Kollegiengeb\"aude I Bau B, 85072 Eichst\"att, Germany}
\email{daegwans@gmail.com}

\author[F. Philipp]{Friedrich Philipp}
\address{{\bf F.~Philipp:} Technische Universit\"at Ilmenau, Institute for Mathematics,
Weimarer Stra\ss e 25, D-98693 Ilmenau, Germany}
\email{friedrich.philipp@tu-ilmenau.de}

\author[F. Voigtlaender]{Felix Voigtlaender}
\address{{\bf F.~Voigtlaender:}
Department of Mathematics,
Technical University of Munich,
85748 Garching bei München,
Germany}
\email{felix@voigtlaender.xyz}


\begin{abstract}
  We consider Gabor frames generated by a general lattice and a window function 
  that belongs to one of the following spaces:
  the Sobolev space $V_1 = H^1(\R^d)$,
  the weighted $L^2$-space $V_2 = L_{1 + |x|}^2(\R^d)$,
  and the space $V_3 = \HH^1(\R^d) = V_1 \cap V_2$
  consisting of all functions with finite uncertainty product;
  all these spaces can be described as modulation spaces with respect to suitable weighted
  $L^2$ spaces.
  In all cases, we prove that the space of Bessel vectors in $V_j$ is mapped bijectively onto itself
  by the Gabor frame operator. 
  As a consequence, if the window function belongs to one of the three spaces,
  then the canonical dual window also belongs to the same space.
  In fact, the result not only applies to frames, but also to frame sequences.
\end{abstract}


\subjclass[2010]{Primary: 42C15. Secondary: 42C40, 46E35, 46B15.}
\keywords{Gabor frames, Sobolev space, Invariance, Dual frame, Regularity of dual window.}

\maketitle
\thispagestyle{empty}

\section{Introduction}

Analyzing the time-frequency localization of functions
is an important topic in harmonic analysis.
Quantitative results on this localization are usually formulated in terms of function spaces
such as Sobolev spaces, modulation spaces, or Wiener amalgam spaces.
An especially important space is the \emph{Feichtinger algebra} $S_0 = M^1$
\cite{FeichtingerNewSegalAlgebra,JakobsenNoLongerNewSegalAlgebra} which has numerous
remarkable properties; see, e.g., \cite[Section A.6]{ChristensenBook} for a compact overview.
Yet, in some cases it is preferable to work with more classical spaces like the Sobolev space
$H^1 (\R^d) = W^{1,2}(\R^d)$, 
the weighted $L^2$-space $L_{1 + |x|}^2(\R^d) = \{ f : \R^d \to \CC : (1 + |x|) f(x) \in L^2 \}$,
or the space $\HH^1(\R^d) = H^1 (\R^d) \cap L_{1 + |x|}^2(\R^d)$ which consists of all functions
$g \in L^2(\R^d)$ with finite uncertainty product
\begin{equation}\label{eqn:FUP}
  \left(
    \int_{\R^d}
      |x|^2
      \cdot |g(x)|^2
    \, dx
  \right)
  \left(
    \int_{\R^d}
      |\omega|^2
      \cdot |\wh g(\omega)|^2
    \,d \omega
  \right)
  < \infty .
\end{equation}
Certainly, one advantage of these classical spaces is that membership of a function in the space
can be decided easily. 
We remark that all of these spaces fall into the scale of modulation spaces
(see Section~\ref{s:Invariance}).

In Gabor analysis, it is known (see e.g., \cite[Proposition~5.2.1]{GroechenigTFFoundations} and \cite[Theorem~12.3.2]{ChristensenBook}) that for a Gabor frame generated by a lattice, the canonical dual frame is again a Gabor system (over the same lattice), generated by the so-called \emph{dual window}.
An important question is what kind of time-frequency localization conditions
are inherited by the dual window.
Precisely, if $g \in L^2(\R^d)$ belongs to a certain ``localization Banach space'' $V$
and if $\La \subset \R^{2d}$ is such that $(g,\Lambda)$ forms a Gabor frame for $L^2(\R^d)$,
then does the canonical dual window belong to $V$ as well?
A celebrated result in time-frequency analysis states that this is true for the
Feichtinger algebra $V = S_0(\R^d)$; see \cite{GroechenigLeinert} for separable lattices $\Lambda$
and \cite[Theorem~7]{BalanDensityOvercompletenessLocalization} for irregular sets $\Lambda$.
In the case of separable lattices, the question has been answered affirmatively
also for the Schwartz space $V = \calS(\R)$ \cite[Proposition~5.5]{j}
and for the Wiener amalgam space $V = W(L^{\infty},\ell_v^1)$
with a so-called \emph{admissible weight} $v$; see \cite{ko}.
Similarly, the setting of the spaces $V = W(C_\alpha,\ell_v^q)$ (with the H\"{o}lder spaces $C_\alpha$)
is studied in \cite{w}---but except in the case $q = 1$, some additional assumptions
on the window function $g$ are imposed.

To the best of our knowledge, the question has not been answered for modulation spaces other than
$V = M^1_v$, and in particular, not for any of the spaces $V = H^1(\R^d)$, $V = L_{1 + |x|}^2(\R^d)$,
and $V = \HH^1(\R^d)$.
In this note, we show that the answer is affirmative for all of these spaces:

\begin{thm}\label{t:main}
  Let $V \in \{ H^1(\R^d), L_{1 + |x|}^2(\R^d), \HH^1(\R^d) \}$.
  Let $g \in V$ and let $\La \subset \R^{2d}$ be a lattice such that
  the Gabor system $(g, \Lambda)$ is a frame for $L^2(\R^d)$ with frame operator $S$.
  Then the canonical dual window $S^{-1} g$ belongs to $V$.
  Furthermore, $(S^{-1/2} g, \Lambda)$ is a Parseval frame for $L^2(\R^d)$ with $S^{-1/2} g \in V$.
\end{thm}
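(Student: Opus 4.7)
The plan is to deduce Theorem~\ref{t:main} from the central structural fact established in Section~\ref{s:Invariance}: for each choice of $V$, the frame operator $S$ restricts to a bounded bijection of the Banach space $\calB_V$ onto itself, where $\calB_V$ consists of all $f \in V$ that are Bessel vectors for $\La$, normed (for instance) by $\|f\|_V$ plus the optimal Bessel bound. Each $V$ is realized as a modulation space $M^2_w(\R^d)$ for an appropriate polynomial weight $w$ on $\R^{2d}$, which unifies the three cases and is what makes the $S$-invariance of $\calB_V$ tractable. Granting bijectivity, the first claim of the theorem is immediate: since $(g,\La)$ is a frame we have $g \in \calB_V$, and hence $S^{-1}g \in \calB_V \subset V$.

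For the tight frame claim, the $L^2$-assertion that $(S^{-1/2}g,\La)$ is Parseval is standard; $S^{-1/2}$ commutes with each $\pi(\la)$, so the frame operator of $(S^{-1/2}g,\La)$ equals $S^{-1/2} S S^{-1/2}=I_{L^2}$. The nontrivial point is $S^{-1/2}g \in V$. My plan is to express $S^{-1/2}$ through the integral representation
\[
  S^{-1/2}\;=\;\tfrac{1}{\pi}\int_0^\infty t^{-1/2}(S+tI)^{-1}\,dt,
\]
valid as an operator identity on $L^2$, and then transfer it to $\calB_V$ applied to $g$. This reduces to showing that the bijectivity proof for $S$ on $\calB_V$ goes through, with uniform constants in $t \ge 0$, for the whole shifted family $S+tI$, yielding the resolvent bound $\|(S+tI)^{-1}\|_{\calB_V \to \calB_V}\le C(A+t)^{-1}$, where $A$ is the lower frame bound. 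Granting such a bound, $\int_0^\infty t^{-1/2}(A+t)^{-1}\,dt=\pi/\sqrt{A}<\infty$, so the integral converges absolutely in $\calB_V$ when applied to $g$, placing $S^{-1/2}g$ in $\calB_V \subset V$.

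The main obstacle is precisely this uniform resolvent estimate: bijectivity of $S$ alone is an isolated spectral fact, whereas running a Dunford-type calculus on $\calB_V$ requires invertibility of a whole one-parameter family with uniform bounds, i.e.\ a form of spectral invariance between the $L^2$- and $\calB_V$-spectra of $S$. If the argument in Section~\ref{s:Invariance} is built, for example, from a Janssen-type representation of $S$ inside an algebra that already contains resolvents, then this extension should be essentially parallel; otherwise a separate perturbation / Neumann-series patch for $t$ in compact subsets of $[0,\infty)$, combined with the trivial bound $\|(S+tI)^{-1}\|_{\calB_V \to \calB_V} \le \|S^{-1}\|_{\calB_V \to \calB_V}$ for small $t$, should suffice. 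A tempting shortcut via the polynomial series $S^{-1/2}=\alpha^{1/2}\sum_n\binom{-1/2}{n}(I-\alpha S)^n$ runs into the obstruction that $\|I-\alpha S\|_{\calB_V}$ need not be strictly less than one for any $\alpha>0$, so the resolvent-integral route looks more robust.
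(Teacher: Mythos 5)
There is a genuine gap: both facts on which your argument pivots are exactly the ones that need proof, and you take them as given. First, the invariance result that can be proved directly (the analogue of Proposition~\ref{p:bounded}) is only the \emph{boundedness} of $S_{\La,g}$ on $V_\Lambda := V \cap \calB_\La$; bijectivity of $S$ on $V_\Lambda$ is not a structural fact one gets for free from the modulation-space description, and "granting bijectivity" already concedes the first half of the theorem. Second, for $S^{-1/2}g$ your resolvent-integral route needs $-t \in \rho\bigl(S|_{V_\Lambda}\bigr)$ for all $t>0$ together with the bound $\|(S+tI)^{-1}\|_{V_\Lambda \to V_\Lambda} \le C(A+t)^{-1}$, i.e.\ precisely a spectral-invariance statement $\sigma\bigl(S|_{V_\Lambda}\bigr) \subset \sigma(S) \subset [A,B]$. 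You name this as "the main obstacle" but do not prove it, and the patches you suggest do not close it: the inequality $\|(S+tI)^{-1}\|_{V_\Lambda \to V_\Lambda} \le \|S^{-1}\|_{V_\Lambda \to V_\Lambda}$ for small $t$ is a Hilbert-space/positivity fact that has no reason to hold for the restriction of $S$ to the Banach space $V_\Lambda$ (where it is not self-adjoint), and a Neumann-series/perturbation argument only extends invertibility locally from points where it is already known, so it cannot rule out spectrum of $S|_{V_\Lambda}$ on $(-\|S\|_{V_\Lambda},0)$, which is exactly the range of $t$ your integral must traverse.

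The paper's proof supplies precisely this missing ingredient by elementary spectral theory: Lemma~\ref{l:spectra_eq} reduces $\sigma(S|_{V_\Lambda}) = \sigma(S)$ to the inclusion $\sap(S|_{V_\Lambda}) \subset \sigma(S)$ (using $\partial\sigma \subset \sap$ and Stone's formula), and Theorem~\ref{t:spectra} verifies that inclusion using two identities from Proposition~\ref{p:bounded}: the commutation $C_{\La,(S-z)f} = C_{\La,f}\circ(S-\bar z)$ and the derivative formula $\partial_j(Sf) = S(\partial_j f) + d(\La)\, C_{\La^\iso,f}^*\, d_j$ with $d_j$ independent of $f$; the weighted and $\HH^1$ cases follow by Fourier conjugation and combination (Corollaries~\ref{cor:WeightedSpaceSpectrum} and~\ref{c:bounded_HH}). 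Once $\sigma(S|_{V_\Lambda}) = \sigma(S) \subset [A,B]$ is known, the paper applies the holomorphic functional calculus (Proposition~\ref{t:inverse_closed}) with $F(z)=z^{-1}$ and $F(z)=z^{-1/2}$, which is morally the same as your Balakrishnan-type integral but with the required resolvent control already in hand. So your overall architecture (invariance of $V_\Lambda$ plus a functional-calculus transfer) matches the paper, but the spectral identity that makes the calculus legitimate on $V_\Lambda$ is the heart of the matter and is absent from your proposal.
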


As mentioned above, the corresponding statement of Theorem~\ref{t:main} for $V = S_0(\R^d)$
with separable lattices $\Lambda$ was proved in \cite{GroechenigLeinert}.
In addition to several deeper insights, the proof given in \cite{GroechenigLeinert}
relies on a simple but essential argument showing that the frame operator $S = S_{\La,g}$
maps $V$ boundedly into itself, which is shown in \cite{GroechenigLeinert}
based on Janssen's representation of $S_{\La,g}$. 
In our setting, this argument is not applicable, because---unlike in the case of $V = S_0(\R^d)$---%
there exist functions $g \in \HH^1$ for which $(g,\Lambda)$ is not an $L^2$-Bessel system. 
In addition, the series in Janssen's representation is not even guaranteed
to converge unconditionally in the strong sense for $\HH^1$-functions,
even if $(g,\Lambda)$ is an $L^2$-Bessel system; see Proposition~\ref{prop:MainResult}.
To bypass these obstacles, we introduce for each space $V \in \{ H^1, L_{1 + |x|}^2, \HH^1 \}$
the associated subspace $V_\Lambda$ consisting of all those functions $g \in V$
that generate a Bessel system over the given lattice $\La$.

We remark that most of the existing works concerning the regularity of the (canonical) dual window
rely on deep results related to Wiener's $1/f$-lemma
on absolutely convergent Fourier series.
In contrast, our methods are based on elementary spectral theory
(see \Cref{s:spectra}) and on certain observations regarding the interaction
of the Gabor frame operator with partial derivatives; see Proposition~\ref{p:bounded}.

The paper is organized as follows:
Section~\ref{s:BesselVectors} discusses the concept of Gabor Bessel vectors
and introduces some related notions.
Then, in Section~\ref{s:Invariance}, we endow the space $V_\Lambda$
(for each choice $V \in \{ H^1, L_{1 + |x|}^2, \HH^1 \}$)
with a Banach space norm and show that the frame operator $S$
maps $V_\Lambda$ boundedly into itself, provided that
the Gabor system $(g,\Lambda)$ is an $L^2$-Bessel system and that
the window function $g$ belongs to $V$.
Finally, we prove in Section~\ref{s:spectra} that for any $V \in \{ H^1, L_{1 + |x|}^2, \HH^1 \}$
the spectrum of $S$ as an operator on $V$ coincides with the spectrum of $S$ as an operator on $L^2$.
This easily implies our main result, Theorem~\ref{t:main}.

\section{Bessel vectors}
\label{s:BesselVectors}

For $a,b\in\R^d$ and $f\in L^2(\R^d)$ we define the operators of translation
by $a$ and modulation by $b$ as
\[
  T_a f(x) := f(x-a)
  \quad \text{and} \quad
  M_b f(x) := e^{2\pi ib\cdot x} \cdot f(x),
\]
respectively.
Both $T_a$ and $M_b$ are unitary operators on $L^2(\R^d)$
and hence so is the {\em time-frequency shift}
\[
  \pi(a,b)
  := T_a M_b
   = e^{-2\pi ia\cdot b} \, M_b T_a .
\]
The Fourier transform $\calF$ is defined on $L^1(\R^d) \cap L^2(\R^d)$ by
$\Fourier f (\xi) = \widehat{f}(\xi) = \int_{\R^d} f(x) e^{-2 \pi i x \cdot \xi} \, d x$
and extended to a unitary operator on $L^2(\R^d)$.
For $z = (z_1, z_2) \in \R^d \times \R^d \cong \R^{2d}$ and $f \in L^2(\R^d)$,
a direct calculation shows that
\begin{equation}\label{e:FTpi}
  \calF [\pi(z)f] = e^{-2\pi iz_1\cdot z_2}\cdot\pi(Jz)\wh f,
\end{equation}
where
\[
  J = \mat 0{I}{-I}0.
\]
A (full rank) {\em lattice} in $\R^{2d}$ is a set of the form $\La = A\Z^{2d}$,
where $A\in\R^{2d\times 2d}$ is invertible.
The volume of $\La$ is defined by $\Vol(\La) := |\!\det A|$
and its density by $d(\La) := \Vol(\La)^{-1}$.
The {\em adjoint lattice} of $\La$ is denoted and defined by $\La^\iso := JA^{-\topp}\Z^{2d}$.

The Gabor system generated by a window function $g\in L^2(\R^d)$
and a lattice $\La\subset\R^{2d}$ is given by
\[
  (g,\Lambda) := \bigl\{ \pi(\la)g : \la\in\Lambda \bigr\}.
\]
We say that $g\in L^2(\R^d)$ is a {\em Bessel vector} with respect to $\La$
if the system $(g,\La)$ is a Bessel system in $L^2(\R^d)$,
meaning that the associated {\em analysis operator} $C_{\La,g}$ defined by 
\begin{equation}\label{eq:CoefficientOperator}
  C_{\La,g} f := \big(\<f,\pi(\la)g \>\big)_{\la\in\La},
  \qquad f \in L^2(\R^d) ,
\end{equation}
is a bounded operator from $L^2(\R^d)$ to $\ell^2(\La)$.
We define
\[
  \calB_\La
  := \big\{g\in L^2(\R^d) : (g,\La)\text{ is a Bessel system}\big\} ,
\]
which is a dense linear subspace of $L^2(\R^d)$ because each Schwartz function
is a Bessel vector with respect to any lattice; see \cite[Theorem~3.3.1]{fz}.
It is well-known that $\calB_\La = \calB_{\La^\iso}$ 
(see, e.g., \cite[Proposition~3.5.10]{fz}).
In fact, we have for $g \in \calB_\La$ that
\begin{equation}\label{e:Cnorms}
  \big\| C_{\La^\iso,g} \big\|
  = \Vol(\La)^{1/2} \cdot \big\| C_{\La,g} \big\| ;
\end{equation}
see \mbox{\cite[proof of Theorem~2.3.1]{k}}.
The {\em cross frame operator} $S_{\La,g,h}$ with respect to $\La$
and two functions $g,h\in\calB_\La$ is defined by
\[
  S_{\La,g,h} := C_{\La,h}^*C_{\La,g}.
\]
In particular, we write $S_{\La,g} := S_{\La,g,g}$ which is called
the {\em frame operator} of $(g,\La)$.
The system $(g,\La)$ is called a \emph{frame} if $S_{\La,g}$ is bounded and boundedly invertible
on $L^2(\R^d)$, that is, if $A \Id_{L^2(\R^d)} \leq S_{\La,g} \leq B \Id_{L^2(\R^d)}$
for some constants $0 < A \leq B < \infty$ (called the frame bounds). 
In particular, a frame with frame bounds $A=B=1$ is called a \emph{Parseval frame}.

In our proofs, the so-called {\em fundamental identity of Gabor analysis}
will play an essential role.
This identity states that
\begin{equation}\label{e:fi}
  \sum_{\la\in\La}\<f,\pi(\la)g\>\<\pi(\la)\gamma,h\>
  = d(\La) \cdot \sum_{\mu\in\La^\iso}
                   \<\gamma,\pi(\mu)g\>
                   \<\pi(\mu)f,h\>.
\end{equation}
It holds, for example, if $f, h \in M^1(\R^d) = S_0(\R^d)$ (the Feichtinger algebra)
and $g,\gamma\in L^2(\R^d)$; see \mbox{\cite[Theorem~3.5.11]{fz}}.
We will use the following version of the fundamental identity:

\begin{lem}
  The fundamental identity \eqref{e:fi} holds if $g,h\in\calB_\La$ or $f,\gamma\in\calB_\La$.
\end{lem}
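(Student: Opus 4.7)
The plan is to extend the cited version of \eqref{e:fi} (valid for $f,h\in M^1$, $g,\gamma\in L^2$) by two successive density arguments. The key observation is that once one of the two pairs $(f,\gamma)$ or $(g,h)$ consists of Bessel vectors, both sides of \eqref{e:fi} are absolutely convergent and in fact define bounded bilinear forms in the other pair, viewed as an element of $L^2(\R^d)\times L^2(\R^d)$. Indeed, Cauchy--Schwarz together with $h\in\calB_\La$ and $\pi(\la)^{\!*}=e^{2\pi i\la_1\cdot\la_2}\pi(-\la)$ gives
\[
 \sum_{\la\in\La}|\<\pi(\la)\gamma,h\>|^2 \;=\; \sum_{\la\in\La}|\<\gamma,\pi(-\la)h\>|^2 \;\le\; \|C_{\La,h}\|^2\,\|\gamma\|_{L^2}^2
\]
(using $-\La=\La$), and the analogous bound for the right-hand side is obtained from $\calB_\La=\calB_{\La^\iso}$ together with \eqref{e:Cnorms}. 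In particular, for $g,h\in\calB_\La$ both sides of \eqref{e:fi} are bounded by $\|C_{\La,g}\|\|C_{\La,h}\|\|f\|_{L^2}\|\gamma\|_{L^2}$.

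As a first step I would establish two intermediate forms of the identity: first, that \eqref{e:fi} holds whenever $f,\gamma\in M^1$ and $g,h\in L^2$; and second (symmetrically), that it holds whenever $g,h\in M^1$ and $f,\gamma\in L^2$. For the first, I fix $f,\gamma\in M^1\subset\calB_\La$ and $g\in L^2$, choose $h_n\in M^1$ with $h_n\to h$ in $L^2$, apply the standard version of \eqref{e:fi} to $(f,g,\gamma,h_n)$, and let $n\to\infty$; both sides are continuous in $h\in L^2$ thanks to the estimate above (with the roles of the variables suitably permuted, using $f,\gamma\in\calB_\La$). The second intermediate form is proved analogously by fixing $g,h\in M^1$ and $\gamma\in L^2$ and approximating $f\in L^2$ by $f_n\in M^1$.

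The main claim then follows by one more density argument of the same flavor. In the case $g,h\in\calB_\La$ and $f,\gamma\in L^2$, both sides of \eqref{e:fi} are, by the estimate above, bounded bilinear forms in $(f,\gamma)\in L^2\times L^2$; they coincide on the dense subspace $M^1\times M^1$ by the first intermediate form, and hence everywhere. In the case $f,\gamma\in\calB_\La$ and $g,h\in L^2$ one proceeds identically, this time using the second intermediate form and viewing the two sides as bounded sesquilinear forms in $(g,h)\in L^2\times L^2$, which again agree on the dense subspace $M^1\times M^1$.

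The only genuinely technical point is the uniform $L^2\times L^2$ bound on the right-hand side of \eqref{e:fi}, which requires converting a Bessel bound over $\La$ into one over $\La^\iso$ via \eqref{e:Cnorms}; beyond that, no tool beyond the cited version of the fundamental identity and routine continuous extension of bilinear forms from a dense subspace is needed.
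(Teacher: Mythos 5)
Your proposal is correct, but it takes a genuinely different route from the paper. Your opening estimates (absolute convergence of both sides and the bound $\|C_{\La,g}\|\,\|C_{\La,h}\|\,\|f\|_{L^2}\,\|\gamma\|_{L^2}$, obtained from the adjoint trick $\pi(\la)^*=e^{2\pi i\la_1\cdot\la_2}\pi(-\la)$, the symmetry $-\La=\La$, and $\calB_\La=\calB_{\La^\iso}$ together with \eqref{e:Cnorms}) are exactly the paper's first reduction step: there, too, one observes that for $g,h\in\calB_\La$ both sides are continuous in $(f,\gamma)\in L^2\times L^2$, and for $f,\gamma\in\calB_\La$ continuous in $(g,h)$. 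The divergence comes afterwards. The paper uses this continuity only to reduce to the case $f,g,h,\gamma\in\calB_\La$, which it then proves \emph{from scratch} by periodizing $V_g f\cdot\overline{V_\gamma h}$ over $\La$, computing the Fourier coefficients of the resulting continuous $\Z^{2d}$-periodic function via the STFT orthogonality relations, checking $\ell^1$-summability of these coefficients, and evaluating the uniformly convergent Fourier series at $x=0$; the cited $M^1$ version of \eqref{e:fi} is never used in that argument. You instead take \cite[Theorem~3.5.11]{fz} as a black box and upgrade it by two density bootstraps: first to the intermediate cases ($f,\gamma\in M^1$, $g,h\in L^2$, varying one window at a time, and symmetrically $g,h\in M^1$, $f,\gamma\in L^2$), then to the stated Bessel cases by noting that both sides are bounded bilinear forms in the remaining pair and agree on the dense subset $M^1\times M^1$. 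Your limit passages are justified by precisely the estimates you state (each step only needs the relevant pair to be Bessel, and $M^1\subset\calB_\La$), so the argument is complete. What each approach buys: yours is shorter and needs nothing beyond the quoted identity plus routine extension of bounded bilinear forms; the paper's is self-contained modulo the STFT orthogonality relations, which matters because the previously recorded proof of the Bessel-vector version \cite{j2} covered only separable lattices in $\R^2$, whereas the paper wanted a proof for general lattices in $\R^{2d}$ not resting on the $S_0$-theory — your argument inherits its generality from whatever generality \cite[Theorem~3.5.11]{fz} provides (which the paper does assert for general lattices, so this is a legitimate, if less independent, foundation).
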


\begin{proof}
In \cite[Subsection~1.4.1]{j2}, the claim is shown for separable lattices in $\R^2$.
Here, we provide a short proof for the general case.
If $g,h \in \calB_{\Lambda}$, then Equation~\eqref{e:Cnorms}
shows that both sides of Equation~\eqref{e:fi} depend continuously on $f,\gamma \in L^2$.
Similarly, if $f,\gamma \in \calB_{\Lambda}$ then both sides of Equation~\eqref{e:fi}
depend continuously on $g,h \in L^2$.
Therefore, and because $\calB_\La$ is dense in $L^2(\R^d)$,
it is no restriction to assume that $f, g, h, \gamma \in \calB_\La$.
Let $\La = A\Z^{2d}$ and define the function
\[
  G(x)
  = \sum_{n\in\Z^{2d}}
      \big\<f,\pi(A(n-x))g\big\>
      \big\<\pi(A(n-x))\gamma,h\big\> ,
  \quad x\in\R^{2d}.
\]
Writing $A x = ( (A x)_1, (A x)_2 ) \in \R^d \times \R^d$, a direct computation shows that
\[
  \langle f, \pi(A(n-x)) g \rangle
  = e^{2 \pi i (A x)_2 \cdot ( (A x)_1 - (A n)_1)}
    \cdot \langle \pi (A x) f, \pi(A n) g \rangle .
\]
Therefore, and because of $g, \gamma \in \calB_\La$ and since $z \mapsto \pi(z) u$
is continuous on $\R^{2d}$ for each $u \in L^2(\R^d)$, the function $G$ is continuous.
Furthermore, we have
\begin{align*}
  \sum_{n \in \Z^{2d}}
    |\langle f, \pi(A(n-x)) g \rangle
    \cdot \langle \pi( A(n-x)) \gamma, h \rangle|
  & = \sum_{n \in \Z^{2 d}}
        |\langle \pi(A x) f, \pi(A n) g \rangle|
        \cdot |\langle \pi(A n) \gamma, \pi(A x) h \rangle| \\
  & \leq \| C_{\Lambda,g} [\pi(A x) f] \|_{\ell^2}
         \cdot \| C_{\Lambda,\gamma} [\pi(A x) h] \|_{\ell^2} \\
  & \leq \| C_{\Lambda,g} \|
         \cdot \| C_{\Lambda,\gamma} \|
         \cdot \| \pi(A x) f \|_{L^2}
         \cdot \| \pi(A x) h \|_{L^2} \\
  & =    \| C_{\Lambda,g} \|
         \cdot \| C_{\Lambda,\gamma} \|
         \cdot \| f \|_{L^2}
         \cdot \| h \|_{L^2} ,
\end{align*}
which will justify the application of the dominated convergence theorem in the following
calculation.
Indeed, $G$ is $\Z^{2d}$-periodic and the $k$-th Fourier coefficient of $G$ (for $k \in \Z^{2d}$)
is given by
\begin{align*}
  c_k
  &= \int_{Q}
       G(x) e^{-2\pi i k x}
     \, dx
   = \sum_{n \in \Z^{2d}}
       \int_{Q}
         \big\<f, \pi(A(n-x))g\big\>
         \big\<\pi(A(n-x))\gamma,h\big\>
         e^{2\pi ik(n-x)}
       \,d x \\
  &= \int_{\R^{2d}}
       \big\<f,\pi(Ax)g\big\>
       \big\<\pi(Ax)\gamma,h\big\>
       e^{2\pi ikx}
     \,dx
  = \frac{1}{|\det A|}
    \int_{\R^{2d}}
      V_g f(y)
      \ol{V_\gamma h(y)}
      \cdot e^{2\pi i A^{-\topp}k\cdot y}
    \, d y \\
  &= d(\La)
     \int_{\R^{2d}}
       V_{\pi(z_k)g}[\pi(z_k)f](y)
       \ol{V_\gamma h(y)}
     \, d y
   = d(\La) \cdot \<\pi(z_k)f,h\> \<\gamma,\pi(z_k)g\>,
\end{align*}
where $Q := [0,1]^{2d}$, $z_k := -JA^{-\topp}k\in\La^\iso$,
and for $f_1, g_1 \in L^2(\R^d)$, $V_{g_1} f_1 (z) = \langle f_1, \pi(z) g_1 \rangle$
for $z \in \R^{2d}$ is the \emph{short-time Fourier transform} of $f_1$ with respect to $g_1$.
Here, we used the orthogonality relation for the short-time Fourier transform
(see \mbox{\cite[Theorem~3.2.1]{GroechenigTFFoundations}})
and the identity $V_{\pi(z)g}[\pi(z)f] = e^{2\pi i\<Jz,\cdot\>}\cdot V_gf$
(\cite[Lemma~1.4.4(b)]{k}). 
Now, as also $f,g\in\calB_\La = \calB_{\La^\iso}$,
we see that $(c_k)_{k \in \Z^{2d}} \in \ell^1(\Z^{2d})$.
Since $G$ is continuous and $\Z^{2d}$-periodic,
this implies that the Fourier series of $G$ converges uniformly and coincides pointwise with $G$.
Hence,
\[
  G(x) = \sum_{k\in\Z^{2d}}
           c_k \, e^{2\pi ikx}
  \quad \text{for all} \;\; x\in\R^{2d} , 
\]
and setting $x = 0$ yields the claim.
\end{proof}


\section{Certain subspaces of modulation spaces invariant under the frame operator}
\label{s:Invariance}

The $L^2$-Sobolev-space $H^1 (\R^d) = W^{1,2}(\R^d)$ is the space of all functions
$f \in L^2(\R^d)$ whose distributional derivatives
$\partial_jf := \frac{\partial f}{\partial x_j}$, $j \in \{ 1,\ldots,d \}$,
all belong to $L^2(\R^d)$. 
We will frequently use the well-known characterization
$H^1 (\R^d) = \bigl\{ f \in L^2(\R^d) :  (1+| \cdot |) \widehat{f} (\cdot) \in L^2 \bigr\}$
of $H^1(\R^d)$ in terms of the Fourier transform.
With the weight function $w : \R^d \to \R$, $x \mapsto 1 + |x|$,
we define the weighted $L^2$-space
$L_w^2 (\R^d) := \bigl\{ f : \R^d \to \CC \colon w (\cdot) f (\cdot) \in L^2 \bigr\}$
which is equipped with the norm $\| f \|_{L_w^2} := \| w \, f \|_{L^2}$.
It is then clear that $L_w^2 (\R^d) = \mathcal{F} [ H^1 (\R^d) ] = \mathcal{F}^{-1} [ H^1 (\R^d) ]$. 
Finally, we define $\HH^1(\R^d) = H^1 (\R^d) \cap L_w^2 (\R^d)$ which is the space
of all functions $f \in H^1(\R^d)$ whose Fourier transform $\wh f$ also belongs to $H^1(\R^d)$. 
Equivalently, $\HH^1(\R^d)$ is the space of all functions $g \in L^2(\R^d)$
with finite uncertainty product \eqref{eqn:FUP}. 

It is worth to note that each of the spaces $H^1 (\R^d)$, $L_w^2 (\R^d)$,
and $\HH^1(\R^d)$ can be expressed as a modulation space 
\(
  M_m^2(\R^d)
  = \{
      f \in L^2(\R^d)
      :
      \int_{\R^{2d}}
        | \<f,\pi(z)\varphi \> |^2 \, |m(z)|^2
      \, dz
      < \infty
    \}
\)
for some weight function $m: \R^{2d} \rightarrow \CC$,
where $\varphi \in \mathcal{S} (\R^d) \backslash \{ 0 \}$ is any fixed function%
\footnote{The definition of $M_m^2$ is known to be independent of the choice of $\varphi$;
see e.g., \cite[Proposition~11.3.2]{GroechenigTFFoundations}.}, for instance a Gaussian. 
Indeed, we have
\[
  H^1(\R^d) = M_{m_1}^2(\R^d),\quad
  L_w^2(\R^d) = M_{m_2}^2(\R^d),\quad
  \text{and}\quad
  \HH^1(\R^d) = H^1 (\R^d) \cap L_w^2(\R^d) = M_{m_3}^2(\R^d),
\]
with $m_1(x,\omega) = 1+|\omega|$, $m_2(x,\omega) = 1+|x|$,
and $m_3(x,\omega) = \sqrt{1 + |x|^2 + |\omega|^2}$, respectively;
see \cite[Proposition 11.3.1]{GroechenigTFFoundations} and \cite[Corollary~2.3]{HeilTinaztepe}. 



Our main goal in this paper is to prove for each of these spaces that if the window function $g$
of a Gabor frame $(g,\Lambda)$ belongs to the space, then so does the canonical dual window.
In this section, we will mostly concentrate on the space $H^1(\R^d)$,
since this will imply the desired result for the other spaces as well.

The corresponding result for the Feichtinger algebra $S_0(\R^d)$ was proved
in \cite{GroechenigLeinert} by showing the much stronger statement that the frame operator
maps $S_0(\R^d)$ boundedly into itself and is in fact boundedly invertible on $S_0(\R^d)$.
However, the methods used in \cite{GroechenigLeinert} cannot be directly transferred
to the case of a window function in $\HH^1(\R^d)$ (or $H^1(\R^d)$),
since the proof in \cite{GroechenigLeinert} leverages 
two particular properties of the Feichtinger algebra which are not shared by $\HH^1(\R^d)$:
\begin{enumerate}
	\item[(a)] Every function from $S_0(\R^d)$ is a Bessel vector with respect to any given lattice;
	\item[(b)] The series in Janssen's representation of the frame operator converges strongly
             (even absolutely in operator norm) to the frame operator
             when the window function belongs to $S_0(\R^d)$.
\end{enumerate}
Indeed, it is well-known that $g\in L^2(\R)$ is a Bessel vector with respect to $\Z\times\Z$
if and only if the Zak transform of $g$ is essentially bounded
(cf.~\cite[Theorem~3.1]{BenedettoDifferentiationAndBLT}), but
\mbox{\cite[Example~3.4]{BenedettoDifferentiationAndBLT}} provides an example
of a function $g\in\HH^1(\R)$ whose Zak transform is not essentially bounded;
this indicates that (a) does not hold for $\HH^1(\R^d)$ instead of $S_0(\R^d)$.
Concerning the statement (b) for $\HH^1(\R^d)$, it is easy to see
that if Janssen's representation converges strongly (with respect to some enumeration of $\Z^2$)
to the frame operator of $(g,\Lambda)$, then the frame operator must be bounded on $L^2(\R)$
and thus the associated window function $g$ is necessarily a Bessel vector.
Therefore, the example above again serves as a counterexample: namely,
the statement (b) fails for such a non-Bessel window functions $g\in\HH^1(\R)$.
Even more, we show in the Appendix that there exist Bessel vectors $g\in\HH^1(\R)$
for which Janssen's representation neither converges unconditionally
in the strong sense nor conditionally in the operator norm.
%
%
%
We mention that in the case of the Wiener amalgam space $W(L^{\infty},\ell_v^1)$
with an admissible weight $v$, the convergence issue was circumvented
by employing Walnut's representation instead of Janssen's
to prove the result for $W(L^{\infty},\ell_v^1)$ in \cite{ko}. 

Fortunately, it turns out that establishing the corresponding result
for $V = H^1(\R^d)$, $L_w^2 (\R^d)$, and $\HH^1(\R^d)$ only requires the invertibility
of the frame operator on a particular subspace of $V$.
Precisely, given a lattice $\Lambda \subset \R^{2d}$, we define
\[
  H^1_\La(\R^d) := H^1(\R^d)\cap\calB_\La,
  \quad
  \HH_\La^1(\R^d) := \HH^1(\R^d)\cap\calB_\La ,
  \quad \text{and} \quad
  \WeightedBessel := L_w^2(\R^d) \cap \calB_{\Lambda} .
\]
We equip the first two of these spaces with the norms
\[
  \|f\|_{H^1_\La} := \|\nabla f\|_{L^2} + \|C_{\La,f}\|_{L^2 \to \ell^2}
  \qquad\text{and}\qquad
  \|f\|_{\HH^1_\La} := \|\nabla f\|_{L^2} + \|\nabla\wh f\|_{L^2} + \|C_{\La,f}\|_{L^2 \to \ell^2},
\]
respectively, where
\[
  \|\nabla f\|_{L^2}
  := \sum_{j=1}^d
       \|\del_jf\|_{L^2}
\]
and $C_{\Lambda, f}$ is the analysis operator defined in \eqref{eq:CoefficientOperator}.
Finally, we equip the space $\WeightedBessel$ with the norm
\[
  \NormWeightedBessel{f} := \| f \|_{L_w^2} + \| C_{\Lambda,f} \|_{L^2, \ell^2} ,
  \qquad \text{where} \qquad
  \| f \|_{L_w^2} := \| w \cdot f \|_{L^2} .
\]
We start by showing that these spaces are Banach spaces.

\begin{lem}\label{lem:BesselH1IsBanach}
  For a lattice $\Lambda\subset\R^{2d}$, the spaces $H^1_\La(\R^d)$, $\WeightedBessel$,
  and $\HH^1_\La(\R^d)$ are Banach spaces which are continuously embedded in $L^2(\R^d)$.
\end{lem}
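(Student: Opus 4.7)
The plan is to isolate a single elementary inequality that makes all three norms automatically dominate the $L^2$-norm, and then to run essentially the same completeness argument in all three cases.

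The key observation is that for every $f\in\calB_\La$,
\[
  \|f\|_{L^2}\;\le\;\|C_{\La,f}\|_{L^2\to\ell^2} .
\]
Indeed, applying $C_{\La,f}$ to $f$ itself and keeping only the $\la=0$ term gives $\|C_{\La,f}\|\cdot\|f\|_{L^2}\ge\|C_{\La,f}f\|_{\ell^2}\ge|\langle f,\pi(0)f\rangle|=\|f\|_{L^2}^2$, which implies the claim. This inequality has two immediate consequences: first, each of the three expressions $\|\cdot\|_{H^1_\La}$, $\NormWeightedBessel{\cdot}$, $\|\cdot\|_{\HH^1_\La}$ is a genuine norm (the implication $\|f\|=0\Rightarrow f=0$ is immediate in the $\WeightedBessel$ case since $w\ge 1$, and follows from the inequality above in the other two cases); second, each of the three spaces embeds continuously into $L^2(\R^d)$.

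For completeness, I would treat $H^1_\La(\R^d)$ in detail and then indicate the (minor) adaptations for the other two cases. Let $(f_n)$ be Cauchy in $H^1_\La$. The continuous embedding into $L^2$ produces $f\in L^2(\R^d)$ with $f_n\to f$ in $L^2$. Since $(\del_jf_n)_n$ is Cauchy in $L^2$ for each $j$, it converges to some $g_j\in L^2$, and passing to distributional limits identifies $g_j=\del_j f$, so $f\in H^1(\R^d)$. Meanwhile, the analysis operators $C_{\La,f_n}$ form a Cauchy sequence in the Banach space of bounded operators $L^2(\R^d)\to\ell^2(\La)$, and hence converge in operator norm to some bounded operator $T$. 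Fixing $h\in L^2(\R^d)$ and $\la\in\La$, the $L^2$-convergence $f_n\to f$ combined with the unitarity of $\pi(\la)$ yields $(Th)_\la=\lim_n\langle h,\pi(\la)f_n\rangle=\langle h,\pi(\la)f\rangle$. Thus $T=C_{\La,f}$; in particular $f\in\calB_\La$, and then $\|f_n-f\|_{H^1_\La}\to 0$ by construction.

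For $\WeightedBessel$ one simply replaces the gradient term by $\|w\,\cdot\|_{L^2}$, with the $L^2$-embedding being automatic from $w\ge 1$; the remainder of the completeness argument is formally identical. For $\HH^1_\La(\R^d)$, one runs the argument above and additionally exploits that $(\nabla\wh f_n)_n$ is Cauchy in $L^2$; the same reasoning applied on the Fourier side delivers $\wh f\in H^1$, i.e.\ $f\in\HH^1$. The only genuinely nontrivial step throughout is the identification of the operator-norm limit $T$ with $C_{\La,f}$, which is standard, and I do not foresee any real obstacle beyond keeping the three cases notationally distinct.
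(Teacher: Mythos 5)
Your proposal is correct and takes essentially the same route as the paper: the same key inequality $\|f\|_{L^2}\le\|C_{\La,f}\|_{L^2\to\ell^2}$ (the paper obtains it from $f=C_{\La,f}^*\delta_{0,0}$, you from the $\la=0$ coordinate of $C_{\La,f}f$), followed by completeness via componentwise Cauchy sequences whose limits are identified through the common embedding into $L^2(\R^d)$. The only difference is cosmetic: where the paper cites \cite[Proposition~3.1]{HanLarson} for completeness of $\calB_\La$ under the Bessel norm, you prove it directly by taking the operator-norm limit of the analysis operators and identifying it coordinatewise as $C_{\La,f}$, which is a perfectly valid self-contained substitute.
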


\begin{proof}
We naturally equip the space $\calB_\La\subset L^2(\R^d)$ with the norm
$\|f\|_{\calB_\La} := \|C_{\La,f}\|_{L^2\to\ell^2}$.
Then $(\calB_\La,\|\cdot\|_{\calB_\La})$ is a Banach space by \cite[Proposition~3.1]{HanLarson}.
Moreover, for $f\in\calB_\La$,
\begin{equation}
  \|f \|_{L^2}
  = \big\| C_{\La, f}^* \, \delta_{0,0} \big\|_{L^2}
  \leq \|C_{\La,f}^*\|_{\ell^2\to L^2}
  = \|f\|_{\calB_\La},
  \label{eq:BesselEmbedsIntoL2}
\end{equation}
which implies that $\calB_\Lambda\hookrightarrow L^2(\R^d)$.
Hence, if $(f_n)_{n\in\N}$ is a Cauchy sequence in $H^1_\La(\R^d)$,
then it is a Cauchy sequence in both $H^1(\R^d)$
(equipped with the norm $\| f \|_{H^1} := \| f \|_{L^2} + \| \nabla f \|_{L^2}$) and in $\calB_\La$.
Therefore, there exist $f\in H^1(\R^d)$ and $g\in\calB_\La$
such that $\|f_n-f\|_{H^1}\to 0$ and $\|f_n-g\|_{\calB_\La}\to 0$ as $n\to\infty$.
But as $H^1(\R^d)\hookrightarrow L^2(\R^d)$ and $\calB_\La\hookrightarrow L^2(\R^d)$,
we have $f_n\to f$ and $f_n\to g$ also in $L^2(\R^d)$, which implies $f=g$.
Hence, $\|f_n - f\|_{H^1_\La}\to 0$ as $n\to\infty$, which proves that $H^1_\La(\R^d)$ is complete.
The proof for $\WeightedBessel$ and $\HH^1_\La(\R^d)$ is similar.
\end{proof}

\begin{prop}\label{p:bounded}
  Let $\Lambda\subset\R^{2d}$ be a lattice.
  If $g,h\in H^1_\La(\R^d)$, then $S_{\La,g,h}$ maps $H^1_\La(\R^d)$ boundedly into itself
  with operator norm not exceeding $\|g\|_{H^1_\La}\|h\|_{H^1_\La}$.
  For $f\in H^1_\La(\R^d)$ and $j \in \{1,\ldots,d\}$ we have
  \begin{align}\label{e:DSf}
    \del_j(S_{\La,g,h}f)
    &= S_{\La,g,h}(\del_jf) + d(\La)\cdot C_{\La^\iso,f}^* \, d_{j,\La^\iso,g,h},
  \end{align}
  where $d_{j,\Lambda^{\iso},g,h} \in \ell^2(\Lambda^{\iso})$ is defined by
  \begin{align}\label{e:de}
    (d_{j,\La^\iso,g,h})_{\mu}
    := \big\<\del_jh,\pi(\mu)g\big\>
       + \big\<h,\pi(\mu)(\del_jg)\big\>,\quad \mu\in\La^\iso.
  \end{align}
\end{prop}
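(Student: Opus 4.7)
The plan is to derive the commutator identity (\ref{e:DSf}) first, since once it is in hand, the claim that $S_{\La,g,h}$ maps $H^1_\La$ boundedly into itself splits into a $\|\nabla\cdot\|_{L^2}$ estimate that is read off directly and a Bessel estimate handled by a short commutation argument. I would begin from the Janssen-type representation produced by the fundamental identity (\ref{e:fi}) applied with $\gamma = h$: since $g, h \in \calB_\La$,
\[
  \langle S_{\La,g,h} u, v\rangle
  = d(\La) \sum_{\mu \in \La^\iso} \sigma_\mu \, \langle \pi(\mu) u, v\rangle,
  \qquad u, v \in L^2(\R^d),
\]
where $\sigma_\mu := \langle h, \pi(\mu) g\rangle$ belongs to $\ell^2(\La^\iso)$ (using $\calB_\La = \calB_{\La^\iso}$). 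To compute the distributional derivative, I would test $\del_j(S_{\La,g,h}f)$ against $\phi \in \calS(\R^d)$, use $\langle \del_j(S_{\La,g,h}f), \phi\rangle = -\langle S_{\La,g,h}f, \del_j\phi\rangle$, expand by the representation above, and then shift the derivative onto $\pi(\mu) f \in H^1(\R^d)$ via the product rule $\del_j[\pi(\mu)f] = 2\pi i (\mu_2)_j \pi(\mu) f + \pi(\mu)\del_j f$. The resulting scalar series decomposes into two absolutely convergent pieces: the first, $d(\La) \sum_\mu 2\pi i (\mu_2)_j \sigma_\mu \langle \pi(\mu) f, \phi\rangle$, equals $d(\La)\langle C_{\La^\iso, f}^* \sigma', \phi\rangle$ with $\sigma'_\mu := 2\pi i (\mu_2)_j \sigma_\mu$, and a one-line integration by parts identifies $\sigma'$ with the sequence $d_{j, \La^\iso, g, h}$ of (\ref{e:de}); the second, $d(\La)\sum_\mu \sigma_\mu \langle \pi(\mu)\del_j f, \phi\rangle$, equals $\langle S_{\La,g,h}(\del_j f), \phi\rangle$ by re-applying the fundamental identity with first argument $\del_j f \in L^2$. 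Since the right-hand side is already in $L^2$, this upgrades to the claimed $L^2$-identity (\ref{e:DSf}).

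For the gradient part of the norm bound, I would use (\ref{e:DSf}) together with the triangle inequality, the bound $\|S_{\La,g,h}\|_{L^2\to L^2} \le \|C_{\La,g}\|\|C_{\La,h}\|$, the identity (\ref{e:Cnorms}) (so that the factor $d(\La)$ cancels against the volume factors coming from $\|C_{\La^\iso, f}^*\|$), and the straightforward estimate $\|d_{j,\La^\iso, g, h}\|_{\ell^2} \le \Vol(\La)^{1/2}\bigl(\|C_{\La,g}\|\|\del_j h\|_{L^2} + \|C_{\La,h}\|\|\del_j g\|_{L^2}\bigr)$ read off from (\ref{e:de}). Summing over $j$ bounds $\|\nabla(S_{\La,g,h}f)\|_{L^2}$ by three of the eight summands in the expanded product $\|g\|_{H^1_\La}\|h\|_{H^1_\La}\|f\|_{H^1_\La}$. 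For the Bessel part, the key observation is that $S_{\La,g,h}\pi(\la) = \pi(\la) S_{\La,g,h}$ for every $\la \in \La$: conjugating $\pi(\mu)$ by $\pi(\la)$ inside the Janssen representation contributes a phase $e^{2\pi i \la \cdot J\mu}$, which equals $1$ because $\la \cdot J \mu \in \Z$ for $\la \in \La = A\Z^{2d}$ and $\mu \in \La^\iso = J A^{-\topp}\Z^{2d}$ (a direct computation using $J^2 = -I$). Consequently $C_{\La, S_{\La,g,h}f} u = C_{\La, f}(S_{\La,g,h}^* u)$, whence $\|C_{\La, S_{\La,g,h}f}\| \le \|C_{\La,f}\|\|C_{\La,g}\|\|C_{\La,h}\|$, which is the remaining $\|\nabla\cdot\|$-free summand in the product expansion. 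Adding the two bounds gives $\|S_{\La,g,h}f\|_{H^1_\La} \le \|g\|_{H^1_\La}\|h\|_{H^1_\La}\|f\|_{H^1_\La}$.

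The most delicate step is the termwise differentiation of the Janssen series: since $\del_j f$ is not in general a Bessel vector, a strong-in-$L^2$ termwise manipulation is unavailable. Testing against Schwartz $\phi$ circumvents this obstacle, because both sums arising after applying the product rule converge absolutely ($d_{j,\La^\iso,g,h} \in \ell^2(\La^\iso)$ paired with $(\langle \pi(\mu) f, \phi\rangle)_\mu \in \ell^2(\La^\iso)$ handles the first, while the fundamental identity handles the second), and the $L^2$-membership of the right-hand side then promotes the resulting weak identity to (\ref{e:DSf}).
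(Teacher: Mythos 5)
Your argument is correct and essentially the paper's own: both rest on the fundamental identity \eqref{e:fi}, the commutation rule $\del_j(\pi(z)v)=2\pi i z_{d+j}\pi(z)v+\pi(z)(\del_j v)$, and testing against smooth (hence Bessel) functions to establish \eqref{e:DSf}, followed by the same volume-factor bookkeeping and the covariance $S_{\La,g,h}\pi(\la)=\pi(\la)S_{\La,g,h}$ for the Bessel part; the only difference is that you pass to the adjoint-lattice (Janssen-type) form first and differentiate there, whereas the paper differentiates the $\La$-side expansion and applies \eqref{e:fi} afterwards. One small caution: your opening identity $\langle S_{\La,g,h}u,v\rangle=d(\La)\sum_{\mu\in\La^\iso}\langle h,\pi(\mu)g\rangle\langle\pi(\mu)u,v\rangle$ is not justified for arbitrary $u,v\in L^2(\R^d)$ (one of $u,v$ must lie in $\calB_\La$ for the series to converge and for the fundamental identity to apply), but since you only ever use it with $u\in\calB_\La$ or with Schwartz test functions, this does not affect the proof.
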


\begin{proof}
Let $f\in H^1_\La(\R^d)$ and set $u := S_{\La,g,h}f$.
First of all, we have $u \in \calB_\La$.
Indeed, a direct computation shows that $S_{\Lambda,g,h}$ commutes with $\pi (\la)$
for all $\la\in\La$, and that $S_{\Lambda, g, h}^\ast = S_{\Lambda, h, g}$,
which shows for $v \in L^2(\R^d)$ that
\[
  (C_{\Lambda,u} v)_{\lambda}
  = \< v, \pi(\la) u\>
  = \< v,\pi(\la) S_{\La,g,h} f \>
  = \< S_{\La,h,g} v, \pi(\la) f \>
  = (C_{\Lambda,f} \circ S_{\Lambda,h,g} \, v)_{\lambda},
\]
and therefore
\begin{equation}\label{e:alledrei}
  \| C_{\La,u} \|
  \leq \|S_{\La,h,g}\| \cdot \|C_{\La,f}\|
  \leq \| C_{\La,g} \| \cdot \| C_{\La,h} \| \cdot \| C_{\La,f} \| < \infty ,
\end{equation}
since $S_{\La,h,g} = C_{\La,g}^*C_{\La,h}$.

We now show that $u \in H^1(\R^d)$.
To this end, note for $v \in H^1(\R^d)$, $a, b \in \R^d$, and $j \in \{1,\ldots,d\}$ that
\[
  \del_j (M_b v)
  = 2\pi i \cdot b_j \cdot M_{b}v + M_{b}(\del_jv)
  \qquad\text{and}\qquad
  \del_j(T_av) = T_{a}(\del_jv)
\]
and therefore
\[
  \del_j(\pi(z)v)
  = 2\pi i \cdot z_{d+j}\cdot\pi(z)v + \pi(z)(\del_jv).
\]
Hence, setting $c_{\la,j} := 2\pi i \cdot \la_{d+j}\cdot\<f,\pi(\la)g \>$ for $\la = (a,b)\in \La$,
we see that
\begin{align}
\begin{split}\label{e:cmn}
  c_{\la,j}
  &= \<\del_jf,\pi(\la)g\> + \<f,\pi(\la)(\del_jg)\>.
\end{split}
\end{align}
In particular, $(c_{\la,j})_{\la\in\La}\in\ell^2(\La)$ for each $j\in\{1,\ldots,d\}$,
because $f,g\in\calB_\La$ and $\partial_j f , \partial_j g \in L^2$.

In order to show that $\del_ju$ exists and is in $L^2(\R^d)$,
let $\phi\in C_c^\infty(\R^d)$ be a test function.
Note that $C_c^\infty(\R^d)\subset \calB_\La$.
Therefore, we obtain
\begin{align*}
  -\big\<u,\del_j\phi\big\>
  &= - \!\!
       \sum_{\la\in\La}
         \<f,\pi(\la)g\big\>
         \big\<\pi(\la)h,\del_j\phi\big\>
  = \sum_{\la\in\La}
      \<f,\pi(\la)g\big\>
      \big\<
        2\pi i\la_{d+j}\cdot\pi(\la)h
        + \pi(\la)(\del_jh)
        ,
        \phi
      \big\>\\
  &= \sum_{\la\in\La}
       c_{\la,j}
       \cdot \big\<\pi(\la)h,\phi\big\>
     + \sum_{\la\in\La}
         \big\<f,\pi(\la)g\big\>
         \big\<\pi(\la)(\del_jh),\phi\big\> \\
  & \!\!\overset{\eqref{e:cmn}}{=}
    \<S_{\La,g,h}(\del_jf),\phi\>
    + \sum_{\la\in\La}
        \<f,\pi(\la)(\del_jg)\>
        \<\pi(\la)h,\phi\>
    + \sum_{\la\in\La}
        \big\<f,\pi(\la)g\big\>
        \big\<\pi(\la)(\del_jh), \phi\big\> \\
  & \!\!\overset{\eqref{e:fi}}{=}
    \<S_{\La,g,h}(\del_jf),\phi\>
    + d(\La) \sum_{\mu\in\La^\iso}
               \Big[
                 \big\<h,\pi(\mu)(\del_jg)\big\>
                 + \big\<\del_jh,\pi(\mu)g\big\>
               \Big]
               \big\<\pi(\mu)f,\phi\big\> \\[-0.1cm]
  &= \left\<
       S_{\La,g,h} (\del_jf)
       + d(\La) \sum_{\mu\in\La^\iso}
                  \Big[
                    \big\<h,\pi(\mu)(\del_jg)\big\>
                    + \big\<\del_jh,\pi(\mu)g\big\>
                  \Big]
                  \pi(\mu)f
       \,,\,\phi
     \right\> \\
  &= \left\<
       S_{\La,g,h}(\del_jf)
       + d(\La)\cdot C_{\La^\iso,f}^* \, d_j
       \,,\phi
     \right\>,
\end{align*}
with $d_j = d_{j,\La^\iso,g,h}$ as in \eqref{e:de}.
Note that $d_j\in \ell^2(\La^\iso)$ because $g,h\in\calB_{\Lambda} = \calB_{\Lambda^\circ}$
and ${\partial_j h , \partial_j g \in L^2}$.
Since $j\in\{1,\ldots,d\}$ is chosen arbitrarily, this proves that $u\in H^1(\R^d)$ with
\[
  \del_ju
  = S_{\La,g,h}(\del_jf)
    + d(\La)\cdot C_{\La^\iso,f}^* \, d_j
  \,\in\, L^2(\R^d)
\]
for $j \in \{1,\ldots,d\}$, which is \eqref{e:DSf}.
Next, recalling Equation \eqref{e:Cnorms} we get
\[
  \|d_j\|_{\ell^2}
  \leq \|C_{\La^\iso,h}\| \cdot \|\del_jg\|_{L^2}
       + \|C_{\La^\iso,g}\| \cdot \|\del_jh\|_{L^2}
  = \Vol(\La)^{1/2}
    \big(
      \|C_{\La,h}\| \cdot \|\del_jg\|_{L^2}
      + \|C_{\La,g}\| \cdot \|\del_jh\|_{L^2}
    \big),
\]
and $\|C_{\La^\iso,f}^*\| = \Vol(\La)^{1/2}\|C_{\La,f}\|$.
Therefore,
\begin{align*}
  \|\del_ju\|_{L^2}
  \leq \|S_{\La,g,h}\| \cdot \|\del_jf\|_{L^2}
       + \big(
           \|C_{\La,h}\| \cdot \|\del_jg\|_{L^2}
           + \|C_{\La,g}\| \cdot \|\del_jh\|_{L^2}
         \big)
         \, \|C_{\La,f}\|.
\end{align*}
Hence, with \eqref{e:alledrei}, we see
\vspace*{-0.3cm}
\begin{align*}
  \|S_{\La,g,h}f\|_{H^1_\La}
  &= \|\nabla u\|_{L^2}
     + \|C_{\La,u}\|
   \leq \sum_{j=1}^d \|\del_ju\|_{L^2}
        + \|C_{\La,g}\| \cdot \|C_{\La,h}\| \cdot \|C_{\La,f}\| \\
  &\leq \|S_{\La,g,h}\| \!\cdot\! \|\nabla f\|_{L^2}
        + \big(
            \|C_{\La,h}\| \!\cdot\! \|\nabla g\|_{L^2}
            \!+\! \|C_{\La,g}\| \!\cdot\! \|\nabla h\|_{L^2}
            \!+\! \|C_{\La,g}\| \!\cdot\! \|C_{\La,h}\|
          \big)
          \|C_{\La,f}\| \\
  &\leq \|C_{\La,g}\| \cdot \|C_{\La,h}\| \cdot \|\nabla f\|_{L^2}
        + \big(
            \|\nabla g\|_{L^2} + \|C_{\La,g}\|
          \big)
          \big(
            \|\nabla h\|_{L^2}
            + \|C_{\La,h}\|
          \big)
          \, \|C_{\La,f}\| \\
  &\leq \|g\|_{H^1_\La}
        \|h\|_{H^1_\La}
        \cdot \|f\|_{H^1_\La},
\end{align*}
and the proposition is proved.
\end{proof}

\section{Spectrum and dual windows}
\label{s:spectra}

Let $X$ be a Banach space.
As usual, we denote the set of bounded linear operators from $X$ into itself by $\calB(X)$.
The {\em resolvent set} $\rho(T)$ of an operator $T\in\calB(X)$ is the set of all $z\in\CC$
for which $T-z := T-z I : X \to X$ is bijective.
Note that $\rho(T)$ is always open in $\CC$.
The {\em spectrum} of $T$ is the complement $\sigma(T) := \CC\backslash\rho(T)$.
The {\em approximate point spectrum} $\sap(T)$ is a subset of $\sigma(T)$
and is defined as the set of points $z\in\CC$ for which there exists a sequence
$(f_n)_{n\in\N}\subset X$ such that $\|f_n\|=1$ for all $n\in\N$
and $\|(T-z)f_n\|\to 0$ as $n\to\infty$.
By \mbox{\cite[Proposition~VII.6.7]{ConwayFA}} we have
\begin{equation}\label{e:partial_sap}
  \partial\sigma(T)\,\subset\,\sap(T).
\end{equation}

\begin{lem}\label{l:spectra_eq}
  Let $(\calH, \| \cdot \|)$ be a Hilbert space, let $S\in\calB(\calH)$ be self-adjoint,
  and let $X\subset\calH$ be a dense linear subspace satisfying $S(X) \subset X$.
  If $\|\cdot\|_X$ is a norm on $X$ such that $(X,\| \cdot \|_X)$ is complete
  and satisfies $X\hookrightarrow\calH$, then $A := S|_X\in\calB(X)$.
  If, in addition, $\sap(A)\subset\sigma(S)$, then $\sigma(A) = \sigma(S)$.
\end{lem}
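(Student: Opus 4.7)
The plan is to prove the two assertions separately. For $A := S|_X \in \calB(X)$, I would invoke the closed graph theorem: if $x_n \to x$ in $X$ and $A x_n \to y$ in $X$, then the continuous embedding $X \hookrightarrow \calH$ yields $x_n \to x$ and $A x_n = S x_n \to y$ in $\calH$; by boundedness of $S$ on $\calH$, also $S x_n \to S x$, so $y = S x = A x$ by uniqueness of limits. Completeness of $X$ then gives $A \in \calB(X)$.

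For the inclusion $\sigma(A) \subset \sigma(S)$, I would combine the hypothesis $\sap(A) \subset \sigma(S)$ with the general inclusion $\partial\sigma(A) \subset \sap(A)$ from \eqref{e:partial_sap} to obtain $\partial\sigma(A) \subset \sigma(S)$. Since $S$ is self-adjoint, $\sigma(S) \subset \R$, and therefore $\CC \setminus \sigma(S)$ is open and connected. The set $\sigma(A) \cap (\CC \setminus \sigma(S))$ is clopen in this connected set: closed as the intersection of a closed set with an open one, and open because any $z$ in it avoids $\partial\sigma(A) \subset \sigma(S)$ and hence lies in the interior of $\sigma(A)$, so (using that $\sigma(S)$ is closed) $z$ has a full neighborhood in $\sigma(A) \setminus \sigma(S)$. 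Connectedness together with boundedness of $\sigma(A)$ versus unboundedness of $\CC \setminus \sigma(S)$ then forces this set to be empty.

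For the reverse inclusion $\sigma(S) \subset \sigma(A)$, I would argue by contradiction: assume $z_0 \in \sigma(S) \setminus \sigma(A)$ and pick an open neighborhood $U \subset \rho(A)$ of $z_0$. For each $f \in X$, the map $w \mapsto R_A(w) f := (A - w)^{-1} f$ is $X$-valued holomorphic on $U$, hence $\calH$-valued holomorphic via the embedding; moreover, on the nonempty open set $U \cap \rho(S)$ (nonempty because $\sigma(S) \subset \R$) it agrees with $w \mapsto R_S(w) f := (S - w)^{-1} f$, since both produce the unique $g \in \calH$ solving $(S - w) g = f$. Therefore $R_S(\cdot) f$ extends holomorphically to $U$ as an $\calH$-valued map; in particular the scalar Cauchy transform
\[
  \phi_f(w) = \langle R_S(w) f, f \rangle = \int_\R \frac{d\mu_f(\lambda)}{\lambda - w},
\]
where $\mu_f$ denotes the scalar spectral measure of $f$ with respect to $S$, is holomorphic on $U$. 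A classical rigidity property of Cauchy transforms of positive Borel measures on $\R$ then forces $\mu_f(U \cap \R) = 0$, that is, $E_S(U \cap \R) f = 0$. Density of $X$ in $\calH$ and continuity of the spectral projection yield $E_S(U \cap \R) = 0$, contradicting $z_0 \in \sigma(S) \cap U \cap \R$ (which forces $E_S(I) \neq 0$ for every open $I \subset \R$ containing $z_0$).

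The main technical input is the rigidity statement for Cauchy transforms in the last step; the rest is a soft combination of the closed graph theorem, analytic continuation of the resolvent across $U$, and a boundary-versus-interior dichotomy for compact subsets of $\CC$.
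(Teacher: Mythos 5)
Your argument is correct and runs essentially parallel to the paper's proof: the closed graph theorem gives $A \in \calB(X)$; the inclusion $\partial\sigma(A)\subset\sap(A)\subset\sigma(S)\subset\R$ forces $\sigma(A)\subset\sigma(S)$; and for the converse you use that $(A-w)^{-1}f=(S-w)^{-1}f$ for $f\in X$ together with analyticity of $w\mapsto (A-w)^{-1}f$ across real points of $\rho(A)$ to show the spectral measure of $S$ vanishes there, finishing by density of $X$ — exactly the paper's structure. The two deviations are minor: for $\sigma(A)\subset\sigma(S)$ you run a clopen/connectedness argument in $\CC\setminus\sigma(S)$, where the paper instead picks $z\in\sigma(A)$ maximizing $|\Im z|$ to conclude $\sigma(A)\subset\R$ and hence $\sigma(A)=\partial\sigma(A)\subset\sap(A)$; and for $\sigma(S)\subset\sigma(A)$ you pass to the scalar measures $\mu_f=\langle E(\cdot)f,f\rangle$ and invoke rigidity of their Cauchy transforms (Stieltjes--Perron inversion), whereas the paper applies Stone's formula to $E((a,b])f$ directly — these are the same inversion principle in scalar and operator form. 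If you write your version up, the one step to make explicit is the symmetry $\phi_f(\bar w)=\overline{\phi_f(w)}$, which ensures the holomorphic extension is real-valued on $U\cap\R$, so that $\Im\phi_f(t+i\eps)\to 0$ locally uniformly and the inversion formula indeed yields $\mu_f(U\cap\R)=0$.
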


\begin{proof}
The fact that $A \in \calB(X)$ easily follows from the closed graph theorem.
Next, since $X \hookrightarrow \calH$, there exists $C > 0$
with $\| f \| \leq C \, \| f \|_X$ for all $f \in X$.
Assume now that additionally $\sap(A) \subset \sigma(S)$ holds.
Note that $\sigma(S) \subset \R$, since $S$ is self-adjoint.
Since $\sigma(A)\subset\CC$ is compact, the value ${r := \max_{w\in\sigma(A)}|\!\Im w|}$ exists.
Choose $z\in\sigma(A)$ such that $|\!\Im z| = r$.
Clearly, $z$ cannot belong to the interior of $\sigma(A)$, and hence $z \in \partial \sigma(A)$.
In view of Equation~\eqref{e:partial_sap}, this implies $z \in \sap(A) \subset \sigma(S) \subset \R$,
hence $r = 0$ and thus $\sigma(A) \subset \R$.
Therefore, $\sigma(A)$ has empty interior in $\CC$, meaning $\sigma (A) = \partial \sigma (A)$.
Thanks to Equation~\eqref{e:partial_sap}, this means $\sigma(A) \subset \sap(A)$,
and hence $\sigma(A) \subset \sigma(S)$, since by assumption $\sap(A) \subset \sigma(S)$.

For the converse inclusion it suffices to show that $\rho(A)\cap\R\subset\rho(S)$.
To see that this holds, let $z\in\rho(A)\cap\R$ and denote by $E$ the spectral measure
of the self-adjoint operator $S$.
Since $\R \cap \rho(A) \subset \R$ is open, there are
$a, b \in \R$ and $\delta_0 > 0$ such that $z \in (a,b)$
and $[a-\delta_0, b+\delta_0] \subset \rho(A)$.
By Stone's formula (see, e.g., \cite[Thm.\ VII.13]{rs}),
the spectral projection of $S$ with respect to $(a,b]$ can be expressed as
\[
  E((a,b])f
  = \lim_{\delta\downto 0}\,
      \lim_{\veps\downto 0}
        \frac{1}{2\pi i}
        \int_{a+\delta}^{b+\delta}
          \big[
            (S-t-i\veps)^{-1}f
            -(S-t+i\veps)^{-1}f
          \big]
        \,dt,
  \qquad f\in\calH,
\]
where all limits are taken with respect to the norm of $\calH$.

Note for $w \in \CC \setminus \R$ that $w \in \rho(S) \subset \rho(A)$.
Furthermore, $A - w = (S - w)|_X$, which easily implies $(S - w)^{-1}|_X = (A - w)^{-1}$.
Hence, for $f \in X$,
\begin{align*}
  \|E((a,b])f\|
  &\leq \lim_{\delta\downto 0}\,
          \lim_{\veps\downto 0}
            \frac{1}{2\pi}
            \int_{a+\delta}^{b+\delta}
              \big\|(S- t-i\veps)^{-1}f-(S- t+i\veps)^{-1}f\big\|
            \,d t\\
  &\leq C \cdot \lim_{\delta\downto 0}\,
                  \lim_{\veps\downto 0}
                    \frac{1}{2\pi}
                    \int_{a+\delta}^{b+\delta}
                      \big\|(A- t-i\veps)^{-1}f-(A- t+i\veps)^{-1}f\big\|_{X}
                    \,d t \\
  &= C \cdot \lim_{\delta\downto 0}
               \frac{1}{2\pi}
               \int_{a+\delta}^{b+\delta}
                 \lim_{\veps\downto 0}
                   \big\|(A- t-i\veps)^{-1}f-(A- t+i\veps)^{-1}f\big\|_{X}
               \,d t\\
  &=0,
\end{align*}
since the map $\rho(A)\to X$, $z\mapsto (A-z)^{-1}f$ is analytic
and thus uniformly continuous on compact sets.
This implies $E((a,b])f = 0$ for all $f\in X$ and therefore $E((a,b]) = 0$
as $X$ is dense in $\calH$.
But this means that $(a,b) \subset \rho(S)$ (see \cite[Prop.\ on p.\ 236]{rs})
and thus $z \in \rho(S)$.
\end{proof}


For proving the invertibility of $S_{\Lambda,g}$ on $H_{\Lambda}^1, L_{w,\Lambda}^2$,
and $\HH^1_{\Lambda}$, we first focus on the space $H^1_\La(\R^d)$. 
Note that if $g \in H^1_\La(\R^d)$, then $S_{\La,g}$ maps $H^1_\La(\R^d)$ boundedly into itself
by Proposition~\ref{p:bounded}.
For $g \in H^1_\La(\R^d)$, we will denote the restriction of $S_{\La,g}$ to $H^1_\La(\R^d)$
by $A_{\La,g}$; that is, $A_{\La,g} := S_{\La,g} |_{H^1_\La(\R^d)} \in\calB(H^1_\La(\R^d))$.

\begin{thm}\label{t:spectra}
  Let $\Lambda\subset\Z^{2d}$ be a lattice and let $g\in H^1_\La(\R^d)$.
  Then
  \[
    \sigma(A_{\La,g}) = \sigma(S_{\La,g}).
  \]
\end{thm}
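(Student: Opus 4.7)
The plan is to invoke Lemma~\ref{l:spectra_eq} with Hilbert space $\calH = L^2(\R^d)$, self-adjoint operator $S = S_{\La,g} = C_{\La,g}^* C_{\La,g}$, and dense subspace $X = H^1_\La(\R^d)$ equipped with $\|\cdot\|_{H^1_\La}$. The ``routine'' hypotheses are easy to collect: Lemma~\ref{lem:BesselH1IsBanach} gives that $H^1_\La(\R^d)$ is a Banach space that embeds continuously into $L^2(\R^d)$; Proposition~\ref{p:bounded} applied with $h=g$ yields $S(H^1_\La(\R^d)) \subset H^1_\La(\R^d)$, so $A_{\La,g} \in \calB(H^1_\La(\R^d))$; and density of $H^1_\La(\R^d)$ in $L^2(\R^d)$ follows from the inclusion $\calS(\R^d) \subset H^1(\R^d) \cap \calB_\La$ together with the $L^2$-density of the Schwartz class. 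The substance of the proof is therefore the remaining hypothesis $\sap(A_{\La,g}) \subset \sigma(S_{\La,g})$.

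To establish this containment I would fix $z \in \rho(S_{\La,g})$ and produce a constant $C(z) > 0$ with
\[
  \|f\|_{H^1_\La} \le C(z) \cdot \|(A_{\La,g} - z) f\|_{H^1_\La}
  \quad \text{for all} \;\; f \in H^1_\La(\R^d),
\]
which immediately forces $z \notin \sap(A_{\La,g})$. Write $S := S_{\La,g}$. Two ingredients enter. First, the identity \eqref{e:DSf} with $h = g$ rearranges to
\[
  (S - z)(\del_j f) = \del_j\bigl((S - z) f\bigr) - d(\La) \cdot C_{\La^\iso, f}^* \, d_{j, \La^\iso, g, g},
\]
and applying the $L^2$-bounded inverse $(S-z)^{-1}$ and summing over $j \in \{1,\ldots,d\}$ gives
\[
  \|\nabla f\|_{L^2}
  \le \|(S-z)^{-1}\| \cdot \Bigl(
        \|\nabla((S-z) f)\|_{L^2} + M \cdot \|C_{\La, f}\|
      \Bigr),
\]
with $M := d(\La) \, \Vol(\La)^{1/2} \sum_{j=1}^d \|d_{j, \La^\iso, g, g}\|_{\ell^2}$ finite by \eqref{e:Cnorms}. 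Second, the commutation $\pi(\la) S = S \pi(\la)$ and self-adjointness of $S$ give, for every $v \in L^2$, the cross-frame identity $C_{\La, (S-z) f} \, v = C_{\La, f} (S - \bar z) v$; since $\sigma(S) \subset \R$ forces $\bar z \in \rho(S)$, this yields
\[
  \|C_{\La, f}\| \le \|(S - \bar z)^{-1}\| \cdot \|C_{\La, (S-z) f}\|.
\]
Plugging the second estimate into the first and adding the two then furnishes the desired bound $\|f\|_{H^1_\La} \le C(z) \cdot \|(S-z) f\|_{H^1_\La}$, and Lemma~\ref{l:spectra_eq} concludes the proof.

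The main obstacle is controlling the Bessel-bound contribution $\|C_{\La, f}\|$ inside the norm $\|\cdot\|_{H^1_\La}$. Because $\del_j$ does not commute with the frame operator, the correction term $d(\La) \, C_{\La^\iso, f}^* \, d_{j, \La^\iso, g, g}$ appearing in \eqref{e:DSf} couples the gradient estimate to $\|C_{\La,f}\|$, so a naive derivative-by-derivative argument is insufficient. The cross-frame identity $C_{\La, (S-z) f} = C_{\La, f} (S - \bar z)$ is precisely the mechanism that converts $L^2$-resolvent information into a bound on the Bessel-norm contribution and thereby closes the loop.
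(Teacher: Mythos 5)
Your proposal is correct and takes essentially the same route as the paper: both reduce matters via Lemma~\ref{l:spectra_eq} to showing $\sap(A_{\La,g})\subset\sigma(S_{\La,g})$, and both do so using exactly the two identities you isolate, namely \eqref{e:DSf} with $h=g$ and the relation $C_{\La,(S-z)f}=C_{\La,f}\circ(S-\bar z)$. The only difference is presentational: the paper argues by contradiction with a normalized approximate-eigenvector sequence, which is just the sequential form of the uniform bound $\|f\|_{H^1_\La}\le C(z)\,\|(A_{\La,g}-z)f\|_{H^1_\La}$ that you derive directly.
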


\begin{proof}
For brevity, we set $A := A_{\La,g}$ and $S := S_{\La,g}$.
Due to Lemma~\ref{l:spectra_eq}, we only have to prove that $\sap(A)\subset\sigma(S)$.
For this, let $z\in\sap(A)$.
Then there exists a sequence $(f_n)_{n\in\N} \subset H^1_\La(\R^d)$ such that
$\|f_n\|_{H^1_\La} = 1$ for all $n \in \N$ and $\|(A - z)f_n\|_{H^1_\La}\to 0$ as $n \to \infty$.
The latter means that, for each $j \in \{1, \ldots, d\}$,
\begin{equation}\label{e:two}
  \big\| \del_j(S f_n) -  z \cdot (\del_j f_n) \big\|_{L^2}\to 0
  \qquad\text{and}\qquad
  \big\| C_{\La,(S -  z)f_n} \big\|\to 0.
\end{equation}
Suppose towards a contradiction that $z \notin \sigma(S)$.
Since $S$ is self-adjoint, this implies $\overline{z} \notin \sigma(S)$.
Furthermore, because $S$ is self-adjoint and commutes with $\pi(\la)$ for all $\la \in \La$,
we see for $f \in \calB_\La$ that $C_{\La,(S - z)f} = C_{\La,f} \circ (S - \ol z)$
and hence $C_{\La,f_n} = C_{\La,(S-z)f_n} \circ (S - \ol z)^{-1}$,
which implies that $\|C_{\La,f_n}\|\to 0$.
Hence, also $\|C_{\La^\iso,f_n}\|\to 0$ as $n\to\infty$ (see Equation~\eqref{e:Cnorms}).
Now, by Equation~\eqref{e:DSf}, we have
\[
  \del_j(Sf_n) - z \cdot (\del_j f_n)
  = (S- z)(\del_jf_n) + C_{\La^\iso,f_n}^*d_j
\]
with some $d_j\in\ell^2(\La^\iso)$ which is \emph{independent of} $n$.
Hence, the first limit in \eqref{e:two} combined with $\|C_{\La^\iso,f_n}\|\to 0$
implies that $\|(S -  z)(\del_jf_n)\|_{L^2}\to 0$ and thus $\|\del_jf_n\|_{L^2}\to 0$
as $n\to\infty$ for all $j \in \{1,\ldots,d\}$, since $z \notin \sigma(S)$.
Hence, $\|f_n\|_{H^1_\La} = \sum_{j=1}^d \|\del_j f_n\|_{L^2} + \|C_{\La,f_n}\|\to 0$
as $n\to\infty$, in contradiction to $\|f_n\|_{H^1_\La} = 1$ for all $n \in \N$.
This proves that, indeed, $\sap(A)\subset\sigma(S)$.
\end{proof}

We now show analogous properties to Proposition~\ref{p:bounded} and Theorem~\ref{t:spectra}
for $\WeightedBessel$. 

\begin{cor}\label{cor:WeightedSpaceSpectrum} 
  Let $\Lambda\subset\Z^{2d}$ be a lattice.
  If $g, h \in \WeightedBessel$, then $S_{\Lambda,g,h}$ maps $\WeightedBessel$
  boundedly into itself.
  If $g = h$ and if $A^w_{\La,g} := S_{\La,g} |_{\WeightedBessel} \in \calB(\WeightedBessel)$
  denotes the restriction of $S_{\La,g}$ to $\WeightedBessel$, then
  \[
    \sigma(A^w_{\La,g}) = \sigma(S_{\La,g}).
  \]
\end{cor}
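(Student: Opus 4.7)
The plan is to deduce the corollary from Proposition~\ref{p:bounded} and Theorem~\ref{t:spectra} by conjugating with the Fourier transform, exploiting the symmetry $L_w^2(\R^d) = \Fourier[H^1(\R^d)]$. The key observation is that $\Fourier$ interchanges these spaces while simultaneously interchanging the lattices $\La$ and $J\La$: indeed, $\partial_j \wh f = -2\pi i \, \Fourier(x_j f)$ combined with Plancherel yields the norm equivalence $\|\wh f\|_{H^1} \asymp \|f\|_{L_w^2}$, and Equation~\eqref{e:FTpi} gives $|\langle f, \pi(\la)g\rangle| = |\langle \wh f, \pi(J\la) \wh g\rangle|$ for all $\la \in \La$, so that $g \in \calB_\La$ if and only if $\wh g \in \calB_{J\La}$, with $\|C_{\La,g}\| = \|C_{J\La,\wh g}\|$. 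Since $J$ is invertible, $J\La$ is again a lattice in $\R^{2d}$, and $\Fourier$ therefore restricts to a Banach space isomorphism $\Fourier: \WeightedBessel \to H^1_{J\La}$.

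Using~\eqref{e:FTpi} once more, a short direct computation shows that the phase factors $e^{-2\pi i\la_1\cdot\la_2}$ arising from the analysis coefficients and synthesis factors cancel, yielding the intertwining identity $\Fourier \, S_{\La,g,h} \, \Fourier^{-1} = S_{J\La,\wh g,\wh h}$ on $L^2(\R^d)$. For the first claim of the corollary, the hypothesis $g,h \in \WeightedBessel$ gives $\wh g, \wh h \in H^1_{J\La}$, so Proposition~\ref{p:bounded} applied to the lattice $J\La$ implies that $S_{J\La,\wh g,\wh h}$ maps $H^1_{J\La}$ boundedly into itself. Transporting this back through $\Fourier$ shows that $S_{\La,g,h}$ maps $\WeightedBessel$ boundedly into itself.

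For the spectrum equality when $g = h$, note that $\Fourier$ is unitary on $L^2(\R^d)$, so $S_{\La,g}$ is unitarily equivalent to $S_{J\La,\wh g}$, and the Banach space isomorphism $\Fourier: \WeightedBessel \to H^1_{J\La}$ makes $A^w_{\La,g}$ similar to $A_{J\La,\wh g}$. Thus
\[
  \sigma(A^w_{\La,g})
  = \sigma(A_{J\La,\wh g})
  = \sigma(S_{J\La,\wh g})
  = \sigma(S_{\La,g}) ,
\]
where the middle equality is Theorem~\ref{t:spectra} applied to the lattice $J\La$ and window $\wh g$. The bulk of the work is verifying the intertwining identity and the norm equivalence under $\Fourier$; both reduce to bookkeeping with~\eqref{e:FTpi} and Plancherel. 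The main conceptual obstacle is simply recognizing that the $L_w^2$-statement is the Fourier dual of the $H^1$-statement once one keeps track of the lattice transformation $\La \mapsto J\La$.
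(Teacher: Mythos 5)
Your proof is correct and is essentially identical to the paper's own argument: the paper likewise shows that $\Fourier$ restricts to a Banach space isomorphism $\WeightedBessel \to H^1_{\widehat\La}$ with $\widehat\La = J\La$, verifies the intertwining identity $\Fourier S_{\La,g,h}\Fourier^{-1} = S_{\widehat\La,\wh g,\wh h}$ via \eqref{e:FTpi} and Plancherel, and then invokes Proposition~\ref{p:bounded} and Theorem~\ref{t:spectra} for the lattice $\widehat\La$ and window $\wh g$. No gaps.
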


\begin{proof}
  We equip the space $\calB_{\Lambda} \subset L^2(\R^d)$ with the norm
  ${\| f \|_{\calB_\Lambda} := \| C_{\Lambda,f} \|_{L^2 \to \ell^2}}$,
  where we recall from Equation~\eqref{eq:BesselEmbedsIntoL2} that
  $\| f \|_{L^2} \leq \| f \|_{\calB_\Lambda}$.
  Equation~\eqref{e:FTpi} shows that the Fourier transform
  is an isometric isomorphism from $\calB_{\La}$ to $\calB_{\widehat{\La}}$,
  where $\widehat{\Lambda} := J \Lambda$.
  Furthermore, it is well-known (see for instance \mbox{\cite[Section~9.3]{FollandRealAnalysis}})
  that the Fourier transform ${\Fourier : L^2 \to L^2}$
  restricts to an isomorphism of Banach spaces ${\Fourier : L_{w}^2(\R^d) \to H^1(\R^d)}$,
  where $H^1$ is equipped with the norm $\| f \|_{H^1} := \| f \|_{L^2} + \| \nabla f \|_{L^2}$.
  Taken together, we thus see that the Fourier transform restricts to an isomorphism
  $\Fourier : \WeightedBessel \to H_{\widehat{\Lambda}}^1(\R^d)$; here, we implicitly used that
  ${\| f \|_{H_{\widehat{\Lambda}}^1} \asymp \| f \|_{H^1} + \| f \|_{\calB_{\widehat{\Lambda}}}}$,
  which follows from $\| \cdot \|_{L^2} \leq \| \cdot \|_{\calB_{\widehat{\Lambda}}}$.

  Plancherel's theorem, in combination with Equation~\eqref{e:FTpi}
  shows for $f \in L^2(\R^d)$ that
  \begin{align*}
    \Fourier \bigl[S_{\Lambda,g,h} f\bigr]
    \!=\! \sum_{\la\in\La}
            \Big\<\wh f,\wh{\pi(\la)g}\Big\>
            \wh{\pi(\la)h}
    =\! \sum_{\la\in\La}
          \<\wh f,\pi(J\la)\wh g \,\>
          \pi(J\la)\wh h
    =\! \sum_{\la\in\wh\La}
          \<\wh f,\pi(\la)\wh g \,\>
          \pi(\la)\wh h
    = S_{\wh\La,\wh g,\wh h} \wh f.
  \end{align*}
  Since
  \(
    A_{\widehat{\Lambda},\widehat{g},\widehat{h}}
    = S_{\widehat{\Lambda},\widehat{g},\widehat{h}}|_{H_{\widehat{\Lambda}}^1} :
    H_{\widehat{\Lambda}}^1(\R^d) \to H_{\widehat{\Lambda}}^1(\R^d)
  \)
  is well-defined and bounded by Proposition~\ref{p:bounded}, the preceding calculation
  combined with the considerations from the previous paragraph shows that
  $A_{\Lambda,g,h}^w = S_{\Lambda,g,h}|_{\WeightedBessel} : \WeightedBessel \to \WeightedBessel$
  is well-defined and bounded, with
  \[
    A_{\Lambda,g,h}^w
    = \Fourier^{-1} \circ A_{\widehat{\Lambda},\widehat{g},\widehat{h}} \circ \Fourier .
  \]
  Finally, if $g = h$, we see
  \(
    \sigma(A_{\Lambda,g,g}^w)
    = \sigma(A_{\widehat{\Lambda},\widehat{g},\widehat{g}})
    = \sigma(S_{\widehat{\Lambda},\widehat{g},\widehat{g}})
    = \sigma(S_{\Lambda,g,g}) ,
  \)
  where the second step is due to Theorem~\ref{t:spectra}, and the final step used the identity
  $S_{\Lambda,g,h} = \Fourier^{-1} \circ S_{\widehat{\Lambda},\widehat{g},\widehat{h}} \circ \Fourier$
  from above.
\end{proof}

Finally, we establish the corresponding properties for
$\HH^1_\La(\R^d) = H^1_\La(\R^d) \cap \WeightedBessel$.  

\begin{cor}\label{c:bounded_HH}
  Let $\Lambda\subset\Z^{2d}$ be a lattice.
  If $g,h\in\HH^1_\La(\R^d)$, then $S_{\La,g,h}$ maps $\HH^1_\La(\R^d)$ boundedly into itself.
  If $g=h$ and $\mathbb A_{\La,g} := S_{\La,g} |_{\HH^1_\La(\R^d)} \in\calB(\HH^1_\La(\R^d))$
  denotes the restriction of $S_{\La,g}$ to $\HH^1_\La(\R^d)$, then
  \begin{equation}
    \sigma(\mathbb A_{\La,g}) = \sigma(S_{\La,g}).
    \label{eq:FatHSpectrum}
  \end{equation}
\end{cor}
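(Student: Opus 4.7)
The plan is to reduce both assertions to results already established for $H^1_\La$ and $L^2_{w,\La}$, exploiting the fact that $\HH^1_\La = H^1_\La \cap L^2_{w,\La}$ and that the Fourier transform conjugates the $L^2_w$--story to the $H^1$--story.

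For the boundedness assertion, I would observe that if $g, h \in \HH^1_\La$, then $g, h$ belong both to $H^1_\La$ and to $L^2_{w,\La}$. Hence Proposition~\ref{p:bounded} and Corollary~\ref{cor:WeightedSpaceSpectrum} each give that $S_{\La,g,h}$ is a bounded operator from $H^1_\La$ to $H^1_\La$ and from $L^2_{w,\La}$ to $L^2_{w,\La}$. Since $\HH^1_\La$ is the intersection, it follows immediately that $S_{\La,g,h}$ maps $\HH^1_\La$ into itself, with operator-norm bound coming from the sum of the two individual norm estimates. By the closed graph theorem (or a direct estimate), this map is bounded on $\HH^1_\La$.

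For the spectral identity, I would apply Lemma~\ref{l:spectra_eq} with $\calH = L^2(\R^d)$, $S = S_{\La,g}$, and $X = \HH^1_\La(\R^d)$ equipped with $\| \cdot \|_{\HH^1_\La}$. The required hypotheses are easy: completeness and continuous embedding into $L^2$ come from Lemma~\ref{lem:BesselH1IsBanach}; density of $\HH^1_\La$ in $L^2$ follows because $\calS(\R^d) \subset \HH^1_\La$ (Schwartz functions are Bessel vectors for any lattice by \cite[Theorem~3.3.1]{fz}); and $S(\HH^1_\La) \subset \HH^1_\La$ was just proved. It thus remains to check $\sap(\mathbb A_{\La,g}) \subset \sigma(S_{\La,g})$.

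Fix $z \in \sap(\mathbb A_{\La,g})$ and a sequence $(f_n) \subset \HH^1_\La$ with $\|f_n\|_{\HH^1_\La} = 1$ and $\|(\mathbb A_{\La,g} - z) f_n\|_{\HH^1_\La} \to 0$. The latter gives three things:
\[
  \bigl\| \del_j(Sf_n) - z\,\del_j f_n\bigr\|_{L^2} \to 0,
  \quad
  \bigl\| \del_j(\wh{Sf_n}) - z\,\del_j \wh{f_n}\bigr\|_{L^2} \to 0,
  \quad
  \bigl\| C_{\La,(S-z)f_n} \bigr\| \to 0
\]
for every $j \in \{1,\ldots,d\}$. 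Suppose for contradiction that $z \notin \sigma(S)$. Exactly as in the proof of Theorem~\ref{t:spectra}, the third convergence plus self-adjointness of $S$ and commutation with $\pi(\la)$ yield $\|C_{\La,f_n}\| \to 0$, hence $\|C_{\La^\iso,f_n}\| \to 0$ via \eqref{e:Cnorms}. Plugging this into the identity~\eqref{e:DSf} with $g = h$ reduces the first convergence to $\|(S-z)\del_j f_n\|_{L^2} \to 0$, and then invertibility of $S - z$ on $L^2$ gives $\|\del_j f_n\|_{L^2} \to 0$.

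To handle the Fourier-side derivative, I would transport everything via $\Fourier$, using the intertwining $\Fourier \circ S_{\La,g} = S_{\wh\La,\wh g}\circ \Fourier$ derived in the proof of Corollary~\ref{cor:WeightedSpaceSpectrum}. Because $g \in \HH^1_\La$, we have $\wh g \in H^1_{\wh\La}$; and from \eqref{e:FTpi} one checks $\|C_{\wh\La,\wh{f_n}}\|_{L^2 \to \ell^2} = \|C_{\La,f_n}\|_{L^2 \to \ell^2}$, so $\|C_{\wh\La,\wh{f_n}}\| \to 0$ and hence $\|C_{\wh\La^\iso,\wh{f_n}}\| \to 0$. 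Applying \eqref{e:DSf} now on the Fourier side, the second convergence above rewrites as $\|(S_{\wh\La,\wh g} - z)\del_j \wh{f_n}\|_{L^2} \to 0$ up to an error whose norm is controlled by $\|C_{\wh\La^\iso,\wh{f_n}}\| \to 0$. Since $S_{\wh\La,\wh g}$ is unitarily equivalent to $S$, the assumption $z \notin \sigma(S)$ gives invertibility of $S_{\wh\La,\wh g} - z$ on $L^2$, whence $\|\del_j \wh{f_n}\|_{L^2} \to 0$. Combining all three decays yields $\|f_n\|_{\HH^1_\La} \to 0$, contradicting $\|f_n\|_{\HH^1_\La} = 1$ and completing the proof of \eqref{eq:FatHSpectrum}.

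The only genuine obstacle is the Fourier-side step: one must verify that the Gabor machinery (the key identity \eqref{e:DSf}, the norm identity \eqref{e:Cnorms}, and the invariance of $\|C_{\La,\cdot}\|$ under Fourier conjugation) cooperates cleanly after passing from $\La$ to $\wh\La$. Everything else is a direct repetition of the $H^1_\La$ argument.
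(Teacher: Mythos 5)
Your proposal is correct, and the boundedness part coincides with the paper's argument (intersection of $H^1_\La$ and $\WeightedBessel$ plus the norm equivalence $\|\cdot\|_{\HH^1_\La}\asymp\|\cdot\|_{H^1_\La}+\NormWeightedBessel{\cdot}$, then Proposition~\ref{p:bounded} and Corollary~\ref{cor:WeightedSpaceSpectrum}). For the spectral identity, however, you take a genuinely different route. The paper argues softly: given an approximate eigenvector sequence $(f_n)$ for $\mathbb A_{\La,g}$ normalized in $\HH^1_\La$, it notes that $\|(A_{\La,g}-z)f_n\|_{H^1_\La}\to 0$ and $\NormWeightedBessel{(A^w_{\La,g}-z)f_n}\to 0$ simultaneously, and that along a subsequence at least one of the two component norms of $f_n$ stays bounded away from zero; after renormalizing, this puts $z$ in $\sap(A_{\La,g})$ or $\sap(A^w_{\La,g})$, and then the already-proved identities $\sigma(A_{\La,g})=\sigma(A^w_{\La,g})=\sigma(S_{\La,g})$ (Theorem~\ref{t:spectra} and Corollary~\ref{cor:WeightedSpaceSpectrum}) finish the job without any new computation. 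You instead rerun the approximate-eigenvector contradiction of Theorem~\ref{t:spectra} directly in $\HH^1_\La$: the time-side derivative is handled exactly as in that theorem, and the frequency-side derivative is transported via the intertwining $\Fourier\circ S_{\La,g}=S_{\wh\La,\wh g}\circ\Fourier$, the isometry $\calB_\La\to\calB_{\wh\La}$ from \eqref{e:FTpi}, and the Fourier-side instance of \eqref{e:DSf} with lattice $\wh\La$ and window $\wh g\in H^1_{\wh\La}$ (all of which are legitimately available, since $g\in\HH^1_\La$ gives $\wh g\in H^1\cap\calB_{\wh\La}$ and hence square-summable coefficient sequences $d_{j,\wh\La^\iso,\wh g,\wh g}$). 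Your version is somewhat longer and duplicates machinery that the component results already encapsulate, but it is self-contained at this step and makes explicit why the weighted part of the norm behaves like the $H^1$ part after Fourier conjugation; the paper's subsequence trick buys brevity by outsourcing all analytic work to the two previously established spectral identities. Both arguments are valid.
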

\begin{proof}
  From the definition of $\HH_\Lambda^1$ and the proof of Corollary~\ref{cor:WeightedSpaceSpectrum}
  it is easy to see that $\HH_\Lambda^1 = H_\Lambda^1 \cap \WeightedBessel$,
  and $\| \cdot \|_{\HH_\Lambda^1} \asymp \| \cdot \|_{H_\Lambda^1} + \NormWeightedBessel{\cdot}$.
  Therefore, Proposition~\ref{p:bounded} and Corollary~\ref{cor:WeightedSpaceSpectrum} imply
  that $S_{\Lambda,g,h}$ maps $\HH_\Lambda^1(\R^d)$ boundedly into itself.

  Lemma~\ref{l:spectra_eq} shows that to prove \eqref{eq:FatHSpectrum},
  it suffices to show $\sap(\mathbb{A}_{\Lambda,g}) \subset \sigma(S_{\Lambda,g})$.
  Thus, let ${z \in \sap(\mathbb{A}_{\Lambda,g})}$.
  Then there exists $(f_n)_{n \in \N} \subset \HH^1_\La(\R^d)$ with $\|f_n\|_{\HH_{\Lambda}^1}=1$
  for all $n \in \N$ and $\|(\mathbb A_{\La,g}-z)f_n\|_{\HH^1_\La}\to 0$ as $n\to\infty$.
  Thus, $\|(A_{\La,g}-z)f_n\|_{H^1_\La}\to 0$
  and ${\NormWeightedBessel{(A_{\La,g}^w - z)f_n} \to 0}$ as $n\to\infty$.
  Furthermore, there is a subsequence $(n_k)_{k \in \N}$ such that
  $\lim_{k\to\infty} \| f_{n_k} \|_{H_\Lambda^1} > 0$
  or $\lim_{k\to\infty} \NormWeightedBessel{f_{n_k}} > 0$.
  Hence, $z \in \sigma(A_{\La,g})$ or $z \in \sigma(A_{\La,g}^w)$.
  But Theorem~\ref{t:spectra} and Corollary~\ref{cor:WeightedSpaceSpectrum} show
  $\sigma(A_{\La,g}) = \sigma(A_{\La,g}^w) = \sigma(S_{\Lambda,g})$.
  We have thus shown $\sap(\mathbb{A}_{\Lambda,g}) \subset \sigma(S_{\Lambda,g})$,
  so that Lemma~\ref{l:spectra_eq} shows $\sigma(\mathbb{A}_{\Lambda,g}) = \sigma(S_{\Lambda,g})$.
\end{proof}

The next proposition shows that any operator obtained from
$S_{\Lambda,g}$ through the holomorphic spectral calculus
(see \cite[Sections~10.21--10.29]{RudinFunctionalAnalysis} for a definition)
maps each of the spaces $H_{\Lambda}^1(\R^d)$, $\WeightedBessel$,
and $\HH_\Lambda^1(\R^d)$ into itself.

\begin{prop}\label{t:inverse_closed}
  Let $\Lambda\subset\R^{2d}$ be a lattice,
  let $V \in \{ H_\Lambda^1(\R^d), \WeightedBessel, \HH_\Lambda^1(\R^d) \}$, and $g \in V$.
  Then for any open set $\Omega \subset \CC$ with $\sigma(S_{\Lambda,g}) \subset \Omega$,
  any analytic function $F : \Omega \to \CC$, and any $f \in V$, we have $F(S_{\La,g})f \in V$.
\end{prop}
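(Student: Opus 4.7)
The plan is to derive the statement directly from the holomorphic (Dunford) functional calculus together with the spectral equality that has already been established. Depending on the choice of $V$, let $A \in \calB(V)$ denote the corresponding restriction of $S_{\La,g}$: namely $A_{\La,g}$ when $V = H_\La^1(\R^d)$, $A_{\La,g}^w$ when $V = \WeightedBessel$, and $\mathbb A_{\La,g}$ when $V = \HH_\La^1(\R^d)$. By Theorem~\ref{t:spectra}, Corollary~\ref{cor:WeightedSpaceSpectrum}, and Corollary~\ref{c:bounded_HH}, we have $\sigma(A) = \sigma(S_{\La,g}) \subset \Omega$, so the holomorphic functional calculus on the Banach space $V$ produces a bounded operator $F(A) \in \calB(V)$ defined by a Cauchy integral, which we shall compare to the operator $F(S_{\La,g})$ defined analogously on $L^2(\R^d)$.

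The first step is to identify resolvents. For $z \in \rho(S_{\La,g}) = \rho(A)$ and $f \in V$, the unique $u \in V$ satisfying $(A - z) u = f$ also satisfies $(S_{\La,g} - z) u = f$ when viewed as an identity in $L^2(\R^d)$, since $V \hookrightarrow L^2(\R^d)$ by Lemma~\ref{lem:BesselH1IsBanach}. Hence $u = (S_{\La,g} - z)^{-1} f$, showing that $(z - S_{\La,g})^{-1} f = (z - A)^{-1} f$ for every $f \in V$. The second step is to exploit this via a contour integral. Choose a smooth, positively oriented closed contour $\Gamma \subset \Omega \setminus \sigma(S_{\La,g})$ encircling $\sigma(S_{\La,g})$. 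By the definition of the holomorphic calculus in $\calB(V)$,
\[
  F(A) f
  = \frac{1}{2\pi i} \oint_\Gamma F(z) \, (z - A)^{-1} f \, dz ,
\]
where the integrand takes values in $V$ and the integral converges in the norm of $V$; in particular $F(A) f \in V$. Using the continuous embedding $V \hookrightarrow L^2(\R^d)$ together with the resolvent identification above, the very same integral converges in $L^2(\R^d)$ to
\[
  \frac{1}{2\pi i} \oint_\Gamma F(z) \, (z - S_{\La,g})^{-1} f \, dz
  = F(S_{\La,g}) f ,
\]
which is the standard definition of $F(S_{\La,g})$ on $L^2(\R^d)$. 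Therefore $F(S_{\La,g}) f = F(A) f \in V$, as required.

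No substantial obstacle is expected: the heavy lifting has already been carried out in Proposition~\ref{p:bounded} and its corollaries (boundedness of the restricted operator) and in Theorem~\ref{t:spectra}, Corollary~\ref{cor:WeightedSpaceSpectrum}, and Corollary~\ref{c:bounded_HH} (coincidence of the spectra). What remains is the essentially bookkeeping step of checking that the Dunford integral defining $F(A)$ coincides, when applied to $f \in V$, with the Dunford integral defining $F(S_{\La,g})$, and this reduces to the elementary identification of the two resolvents on $V$ described above.
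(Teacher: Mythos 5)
Your proposal is correct and follows essentially the same route as the paper: invoke the spectral equalities from Theorem~\ref{t:spectra} and Corollaries~\ref{cor:WeightedSpaceSpectrum} and \ref{c:bounded_HH}, form the Dunford integral for $F(A)$ in the Banach space $V$, and use the continuous embedding $V \hookrightarrow L^2(\R^d)$ together with the identification $(S_{\La,g}-z)^{-1}|_V = (A-z)^{-1}$ to conclude that this integral also represents $F(S_{\La,g})f$ in $L^2$. The only cosmetic difference is that you integrate over a single smooth contour, whereas for a general open $\Omega$ one should (as the paper does) allow a finite cycle of closed rectifiable curves surrounding $\sigma(S_{\La,g})$ inside $\Omega$.
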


\begin{proof}
  We only prove the claim for $V = H_{\Lambda}^1 (\R^d)$;
  the proofs for the other cases are similar, using Corollaries~\ref{cor:WeightedSpaceSpectrum}
  or \ref{c:bounded_HH} instead of Theorem~\ref{t:spectra}.
  Thus, let $g\in H^1_\La(\R^d)$ and set $S := S_{\La,g}$ and $A := A_{\La,g}$.
  Let $f\in H^1_\La(\R^d)$ and define
  \[
    h
    = - \frac{1}{2\pi i}
        \int_\Gamma
          F(z) \cdot (A-z)^{-1}f
        \,dz
    \,\in\, H^1_\La(\R^d),
  \]
  where $\Gamma \subset \Omega \setminus \sigma(S)$ is a finite set of closed rectifiable curves
  surrounding $\sigma(S) = \sigma(A)$
  (existence of such curves is shown in \cite[Theorem~13.5]{RudinRealAndComplexAnalysis}).
  Note that the integral converges in $H^1_\La(\R^d)$.
  Since $H^1_\La(\R^d)\hookrightarrow L^2(\R^d)$, it also converges
  (to the same limit) in $L^2(\R^d)$ and hence, by definition of the holomorphic spectral calculus,
  \[
    F(S)f
    = -\frac{1}{2\pi i}
       \int_\Gamma
        F(z) \cdot (S-z)^{-1}f
      \,dz
    = h\,\in\,H^1_\La(\R^d).
    \qedhere
  \]
\end{proof}

%

Our main result (Theorem~\ref{t:main}) is now an easy consequence of Proposition~\ref{t:inverse_closed}.


\begin{proof}[Proof of Theorem~\ref{t:main}]
  Using the fact that $S_{\La,g}$ commutes with $\pi(\la)$ for all $\la \in \La$,
  it is easily seen that $(S_{\La,g}^{-1} \, g,\La)$ is the canonical dual frame of $(g,\La)$
  and that $(S_{\La,g}^{-1/2} g, \Lambda)$ is a Parseval frame for $L^2(\R^d)$;
  see for instance, \cite[Theorem 12.3.2]{ChristensenBook}.
  Note that since $(g,\La)$ is a frame for $L^2(\R^d)$,
  we have $\sigma(S_{\Lambda,g}) \subset [A,B]$
  where $0 < A \leq B < \infty$ are the optimal frame bounds for $(g,\La)$.
  Thus, we obtain $S_{\La,g}^{-1} \, g \in V_\Lambda \subset V$
  and $S_{\La,g}^{-1/2} g \,\in\, V_\Lambda \subset V$
  from Proposition~\ref{t:inverse_closed} with $F(z) = z^{-1}$ and $F(z) = z^{-1/2}$
  (with any suitable branch cut; for instance, the half-axis $(-\infty,0]$),
  respectively, on $\Omega = \bigl\{ x + i y : x \in ( \frac{A}{2}, \infty) , y \in \R\bigr\}$. 
\end{proof}

Finally, we state and prove a version of \Cref{t:main} for Gabor frame \emph{sequences}.
For completeness, we briefly recall the necessary concepts.
Generally, a (countable) family $(h_i)_{i \in I}$ in a Hilbert space $\calH$
is called a \emph{frame sequence}, if $(h_i)_{i \in I}$ is a frame for the subspace
$\calH' := \overline{\linspan} \{ h_i \colon i \in I \} \subset \calH$.
In this case, the frame operator
$S : \calH \to \calH, f \mapsto \sum_{i \in I} \langle f, h_i \rangle h_i$,
is a bounded, self-adjoint operator on $\calH$, and $S|_{\calH'} : \calH' \to \calH'$
is boundedly invertible; in particular, $\ran S = \calH' \subset \calH$ is closed,
so that $S$ has a well-defined \emph{pseudo-inverse} $S^{\dagger}$, given by
\[
  S^{\dagger}
  = (S|_{\calH'})^{-1} \circ P_{\calH'} : \quad
  \calH \to \calH' ,
\]
where $P_{\calH'}$ denotes the orthogonal projection onto $\calH'$.
The \emph{canonical dual system} of $(h_i)_{i \in I}$ is then given by
$(h_i')_{i \in I} = (S^{\dagger} h_i)_{i \in I} \subset \calH'$, and it satisfies
$\sum_{i \in I} \langle f, h_i \rangle h_i ' = \sum_{i \in I} \langle f, h_i' \rangle h_i = P_{\calH'} f$
for all $f \in \calH$.

Finally, in the case where $(h_i)_{i \in I} = (g,\Lambda)$ is a Gabor family with a lattice $\Lambda$,
it is easy to see that $S \circ \pi(\lambda) = \pi(\lambda) \circ S$ and
$\pi(\lambda) \calH' \subset \calH'$ for $\lambda \in \Lambda$, which implies
$P_{\calH'} \circ \pi(\lambda) = \pi(\lambda) \circ P_{\calH'}$,
and therefore $S^\dagger \circ \pi(\lambda) = \pi(\lambda) \circ S^\dagger$
for all $\lambda \in \Lambda$.
Consequently, setting $\gamma := S^\dagger g$, we have $S^\dagger (\pi(\lambda) g) = \pi(\lambda)\gamma$,
so that the canonical dual system of a Gabor frame sequence $(g,\Lambda)$ is
the Gabor system $(\gamma,\Lambda)$, where $\gamma = S^{\dagger} g$ is called the
\emph{canonical dual window} of $(g,\Lambda)$.
Our next result shows that $\gamma$ inherits the regularity of $g$,
if one measures this regularity using one of the three spaces $H^1, L_w^2$, or $\HH^1$.

\begin{prop}\label{p:MainResultForGaborFrameSequences}
  Let $V \in \{ H^1(\R^d), L_w^2(\R^d), \HH^1(\R^d) \}$.
  Let $\Lambda \subset \R^{2d}$ be a lattice and let $g \in V$.
  If $(g,\Lambda)$ is a frame sequence, then the associated canonical dual window $\gamma$
  satisfies $\gamma \in V$.
\end{prop}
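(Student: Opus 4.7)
The plan is to reduce Proposition~\ref{p:MainResultForGaborFrameSequences} to Proposition~\ref{t:inverse_closed} by realizing the pseudo-inverse $S^\dagger$ as a holomorphic function of $S = S_{\Lambda,g}$. Since any frame sequence is in particular a Bessel system, we have $g \in \calB_\Lambda$, and therefore $g$ actually lies in $V_\Lambda$ (the appropriate Bessel subspace of $V$) rather than merely in $V$; this is the input Proposition~\ref{t:inverse_closed} requires.

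First I would locate the spectrum of $S$ on $L^2(\R^d)$. Let $\calH' = \overline{\linspan}\,(g,\Lambda)$, so that $S = S|_{\calH'} \oplus 0|_{\calH'^\perp}$ with respect to the orthogonal decomposition $L^2(\R^d) = \calH' \oplus \calH'^\perp$ (here $\ker S = \calH'^\perp$ holds because $S$ is self-adjoint and $\ran S = \calH'$ is closed). Since $(g,\Lambda)$ is a frame for $\calH'$ with optimal bounds $0 < A \le B < \infty$, the part $S|_{\calH'}$ has spectrum inside $[A,B]$, so that
\[
  \sigma(S) \subset \{0\} \cup [A,B].
\]
Next I would choose an open set of the form $\Omega = \Omega_0 \cup \Omega_1$ with $\Omega_0 \cap \Omega_1 = \emptyset$, where $\Omega_0$ is a disk around $0$ of radius $< A$ and $\Omega_1$ is a small rectangular neighborhood of $[A,B]$ avoiding $0$. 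Since $\Omega_0$ and $\Omega_1$ are disjoint, the function
\[
  F : \Omega \to \CC, \qquad F(z) = 0 \text{ on } \Omega_0, \qquad F(z) = 1/z \text{ on } \Omega_1,
\]
is holomorphic on $\Omega \supset \sigma(S)$.

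The key identification is then $F(S) = S^\dagger$. This is where I expect the only real subtlety to lie, but it can be checked directly from the Cauchy integral definition: choosing a system $\Gamma = \Gamma_0 \cup \Gamma_1$ of closed curves with $\Gamma_j \subset \Omega_j$ surrounding, respectively, $0$ and $[A,B]$, and using $(S-z)^{-1} = (S|_{\calH'} - z)^{-1} \oplus (-z)^{-1} \Id_{\calH'^\perp}$, the $\Omega_0$-piece contributes nothing (since $F \equiv 0$ there), while on $\calH'$ one recovers $(S|_{\calH'})^{-1}$ via the standard Cauchy integral for $z \mapsto 1/z$, and the $\calH'^\perp$-piece picks up $\frac{1}{2\pi i}\int_{\Gamma_1} z^{-2}\,dz = 0$. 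This matches the defining formula $S^\dagger = (S|_{\calH'})^{-1} \oplus 0$.

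Finally, with $F(S) = S^\dagger$ in hand, Proposition~\ref{t:inverse_closed} (applied to the appropriate space $V_\Lambda \in \{H^1_\Lambda, L^2_{w,\Lambda}, \HH^1_\Lambda\}$, noting $\sigma(S) \subset \Omega$ and $g \in V_\Lambda$) yields
\[
  \gamma = S^\dagger g = F(S)\,g \,\in\, V_\Lambda \subset V ,
\]
which is exactly the claim. The argument is essentially a one-line extension of the proof of Theorem~\ref{t:main}; the only new ingredient is the disjoint-union construction of $F$ that accommodates the additional spectral value $0$ introduced by the non-trivial kernel of $S$.
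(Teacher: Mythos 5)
Your proposal is correct and follows essentially the same route as the paper: both reduce the claim to Proposition~\ref{t:inverse_closed} by constructing a function that is holomorphic on a disconnected neighborhood of $\sigma(S)\subset\{0\}\cup[A,B]$, vanishes near $0$, equals $z^{-1}$ on the rest, and satisfies $F(S)=S^{\dagger}$. The only (harmless) difference is that you verify $F(S)=S^{\dagger}$ directly via the Riesz--Dunford integral and the decomposition $S=S|_{\calH'}\oplus 0$, whereas the paper cites an external lemma stating that the self-adjoint functional calculus extends the holomorphic one.
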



\begin{proof}
  The frame operator $S : L^2(\R^d) \to L^2(\R^d)$ associated to $(g,\Lambda)$
  is non-negative and has closed range.
  Consequently, there exist $\eps > 0$ and $R > 0$ such that $\sigma(S) \subset \{ 0 \} \cup [\eps,R]$;
  see for instance \cite[Lemma~A.2]{QuantitativeSubspaceBL}.
  Now, with the open ball $B_\delta(0) := \{ z \in \CC \colon |z| < \delta \}$, define
  \[
    \Omega
    := B_{\eps/4} (0)
       \cup \big\{
              x + i y
              \colon
              x \in (\tfrac{\eps}{2},2 R), y \in (-\tfrac{\eps}{4}, \tfrac{\eps}{4})
            \big\}
    \subset \CC ,
  \]
  noting that $\Omega \subset \CC$ is open, with $\sigma(S) \subset \Omega$.
  Furthermore, it is straightforward to see that
  \[
    \varphi : \quad
    \Omega \to \CC, \quad
    z \mapsto \begin{cases}
                0,      & \text{if } z \in B_{\eps/4} (0), \\
                z^{-1}, & \text{otherwise}
              \end{cases}
  \]
  is holomorphic.
  Since the functional calculus for self-adjoint operators
  is an extension of the holomorphic functional calculus,
  \cite[Lemma~A.6]{QuantitativeSubspaceBL} shows that $S^{\dagger} = \varphi(S)$.
  Finally, since $g \in V_\Lambda$, \Cref{t:inverse_closed} now shows that
  $\gamma = S^{\dagger} g = \varphi(S) g \in V_\Lambda \subset V$ as well.
\end{proof}

\appendix

\section{(Non)-convergence of Janssen's representation for \texorpdfstring{$\HH^1$}{ℍ¹} windows}

In this appendix we provide a counterexample showing that Janssen's representation
of the frame operator associated to a Bessel vector $g \in \HH^1$
in general does not converge \emph{unconditionally} with respect to the strong operator topology.
We furthermore show that for convergence in \emph{operator norm},
even conditional convergence fails in general.

For simplicity, we only consider the setting $d = 1$ and the lattice $\Lambda = \Z \times \Z$.
Thus, given a function $g \in \HH^1 = \HH^1(\R)$, we say that $g$ is a \emph{Bessel vector}
if the Gabor system $(T_k M_\ell \, g)_{k,\ell \in \Z} \subset L^2(\R)$
is a Bessel system.
In this case, \emph{Janssen's representation} of the frame operator
$S := S_g := S_{\Z \times \Z, g, g}$ is (formally) given by
\begin{equation}
  S
  = \sum_{\ell, k \in \Z}
      \langle g, T_k M_\ell \, g \rangle \, T_k M_\ell.
  \label{eq:JannssenRepresentation}
\end{equation}
We are interested in the question whether the series defining Janssen's representation
is unconditionally convergent in the \emph{strong operator topology \braces{SOT\,}},
as an operator on $L^2(\R)$. 
We will construct a function $g \in \HH^1$ for which this fails. 

\subsection{Properties of the Zak transform}\label{sec:ZakProperties}

The construction of the counterexample is based on several properties of the \emph{Zak transform}
that we briefly recall.
Given $f \in L^2(\R)$, its Zak transform $Z f \in L_{{\rm loc}}^2(\R^2)$ is defined as
\[
  Z f (x,\omega)
  := \sum_{k \in \Z}
       f(x - k) e^{2 \pi i k \omega} ,
\]
where the series converges in $L_{{\rm loc}}^2 (\R^2)$;
this is a consequence of the fact that
\begin{equation}
  Z : \quad
  L^2(\R) \to L^2([0,1]^2)
  \text{ is unitary} ,
  \label{eq:ZakIsUnitary}
\end{equation}
as shown in \cite[Theorem~8.2.3]{GroechenigTFFoundations}
and of the fact that the Zak transform $Z f$
of a function $f \in L^2(\R)$ is always \emph{quasi-periodic}, meaning that
\begin{equation}
  Z f(x+n, \omega) = e^{2 \pi i n \omega} Z f (x,\omega)
  \qquad \text{and} \qquad
  Z f (x, \omega + n) = Z f (x,\omega) 
  \label{eq:QuasiPeriodicity}
\end{equation}
for (almost) all $x,\omega \in \R$ and all $n \in \Z$;
see \cite[Equations~(8.4) and (8.5)]{GroechenigTFFoundations}.
Another crucial property is the interplay between the Zak transform and the time-frequency
shifts $T_k M_n$, as expressed by the following formula
(found in \cite[Equation~(8.7)]{GroechenigTFFoundations}):
\begin{equation}
  Z[T_k M_n f] (x,\omega)
  = e^{2 \pi i n x} e^{- 2 \pi i k \omega} Z f (x,\omega)
  = e_{n,-k} (x,\omega) \, Z f (x,\omega) ,
  \label{eq:ZakDiagonalizesTF}
\end{equation}
where we used the functions
\[
  e_{n,k}(x,\omega)
  := e^{2 \pi i(nx+ k\omega)}
  \quad \text{for } n,k\in\Z \text{ and } x,\omega \in \R.
\]
Note that $(e_{n,k})_{n,k\in\Z}$ is an orthonormal basis of $L^2 ([0,1]^2)$.

Finally, we note the following equivalence,
taken from \cite[Theorem~3.1]{BenedettoDifferentiationAndBLT}:
\begin{equation}
  \forall \, g \in L^2(\R): \quad
      g \text{ is a Bessel vector } \Longleftrightarrow \, Z g \in L^\infty([0,1]^2).
  \label{eq:BesselSequenceZakCharacterization}
\end{equation}

\subsection{Properties of \texorpdfstring{$\HH^1$}{ℍ¹}}
\label{sec:FatH1Properties}

A further important property that we will use is the following characterization
of the space $\HH^1$ via the Zak transform, a proof of which is given in
\mbox{\cite[Lemma~2.4]{QuantitativeSubspaceBL}}.
\begin{equation}
  \forall \, f \in L^2(\R): \quad
  f \in \HH^1 \, \Longleftrightarrow \, Z f \in W^{1,2}_{{\rm loc}}(\R^2) .
  \label{eq:FatH1ZakCharacterization}
\end{equation}
It is crucial to observe that the Sobolev space $W^{1,2}(\R^2)$
belongs to the ``borderline'' case of the Sobolev embedding theorem,
meaning $\strut W^{1,2}_{{\rm loc}}(\R^2) \not\hookrightarrow L^\infty_{{\rm loc}}(\R^2)$.
In fact, it is easy to verify (see e.g.\ \cite[Page~280]{EvansPDE})
for $x_0 := (\frac{1}{2}, \frac{1}{2})^T \in \R^2$ that the function
\[
  u_0 : \quad
  (0,1)^2 \to \R, \quad
  x \mapsto \ln \left(\ln \left(1 + \frac{1}{|x - x_0|}\right)\right)
\]
belongs to $W^{1,2}( (0,1)^2 )$, but is not essentially bounded.
Now, using the chain rule and the product rule for Sobolev functions
(see e.g.\ \cite[Exercise~11.51(i)]{LeoniSobolev} and \cite[Theorem~1 in Section~5.2.3]{EvansPDE}),
we see that if $\varphi \in C_c^\infty ( (0,1)^2 )$ is chosen such that $0 \leq \varphi \leq 1$
and such that $\varphi \equiv 1$ on a neighborhood of $x_0$, then the function
\begin{equation}
  u : \quad
  \R^2 \to [0,\infty), \quad
  x \mapsto \varphi(x) \cdot \bigl(1 + \sin(u_0(x))\bigr)
  \label{eq:NonContinuousSobolevFunction}
\end{equation}
satisfies $u \in W^{1,2}(\R^2)$, is continuous and bounded on $\R^2 \setminus \{ x_0 \}$,
but $\lim_{x \to x_0} u(x)$ does not exist; this uses that
$\lim_{x \to \infty} \sin(x)$ does not exist and that on each small ball
$B_\eps (x_0)$, the function $u_0$ attains all values from $(M,\infty)$, for a suitable
$M = M(\eps) > 0$.

\subsection{A connection to Fourier series}
\label{sec:FourierSeriesConnection}

In this subsection, we show that for any fixed window $g \in L^2(\R)$
the unconditional convergence of Janssen's representation
in the strong operator topology implies that the partial sums of a certain Fourier series
are uniformly bounded in $L^\infty$.
This connection will be used in the next subsection to disprove the unconditional convergence
of Janssen's representation in the strong operator topology.

Precisely, define $Q := [0,1]^2$.
For $H \in L^\infty(Q)$, define the associated multiplication operator as
\[
  M_H : \quad
  L^2(Q) \to L^2(Q), \quad
  F \mapsto F \cdot H .
\]
It is well-known that $\| M_H \|_{L^2 \to L^2} = \| H \|_{L^\infty}$.

Let us fix any window $g \in L^2(\R)$.
Given a finite set $I \subset \Z^2$, we define
\[
  S_I : \quad
  L^2(\R) \to L^2(\R), \quad
  f \mapsto \sum_{(k,\ell) \in I}
              \langle g, T_k M_\ell g \rangle T_k M_\ell f . 
\]
Using \Cref{eq:ZakDiagonalizesTF} and the isometry of the Zak transform, we then see
\begin{equation*}
  \begin{split}
    Z(S_I f)
    & = \sum_{(k,\ell) \in I}
          \langle Z g, Z[T_k M_\ell g] \rangle_{L^2(Q)} Z[T_k M_\ell f] 
     = Z f \cdot \sum_{(k,\ell) \in I}
                    \langle Z g, Z g \cdot e_{\ell,-k} \rangle_{L^2(Q)}
                    \cdot e_{\ell,-k} \\
    & = Z f \cdot \sum_{(k,\ell) \in I}
                    \langle Z g \cdot \overline{Z g}, e_{\ell,-k} \rangle_{L^2(Q)}
                    \cdot e_{\ell,-k} 
     = Z f \cdot \sum_{(k,\ell) \in I}
                    \widehat{|Z g|^2}(\ell, -k) \cdot e_{\ell,-k} \\          
		&= M_{\Fourier_{I'} [|Z g|^2]} [Z f] ,
  \end{split}
\end{equation*}
where $I' := \{ (\ell,-k) \colon (k,\ell) \in I \}$ and
\[
  \Fourier_J H
  := \sum_{\alpha \in J}
       \widehat{H}(\alpha) \, e_\alpha
  \quad \text{with} \quad
  \widehat{H}(\alpha) = \langle H, e_\alpha \rangle_{L^2(Q)}
  \quad \text{for} \quad
  J \subset \Z^2 .
\]
In other words, we have
\begin{equation}\label{eq:MultiplicationOperatorConnection}
  S_I = Z^{-1} \circ M_{\Fourier_{I'}[|Z g|^2]} \circ Z.
\end{equation}
Given $J \subset \Z^2$, define $J_\ast := \{ (-\ell,k) \colon (k,\ell) \in J \}$
and note $(J_\ast)' = J$.
Now, suppose that $(S_I)_I$ converges strongly to some (bounded) operator, as $I \to \Z^2$;
this is always the case if Janssen's representation converges unconditionally
(to $S$ or some other operator) in the SOT.
Then, given any sequence $(J_n)_{n \in \N}$ of finite subsets $J_n \subset \Z^2$
with $J_n \subset J_{n+1}$ and $\bigcup_{n=1}^\infty J_n = \Z^2$, we see $(J_n)_\ast \to \Z^2$
so that the sequence $(S_{(J_n)_\ast})_{n \in \N}$ converges strongly to some bounded operator.
By the uniform boundedness principle, this shows $\| S_{(J_n)_\ast} \|_{L^2 \to L^2} \leq C$
for all $n \in \N$ and some $C > 0$.
By \Cref{eq:ZakIsUnitary,eq:MultiplicationOperatorConnection},
and because of $((J_n)_\ast)' = J_n$, this implies
\[
  \big\| \Fourier_{J_n} [|Z g|^2] \big\|_{L^\infty (Q)}
  = \big\| M_{\Fourier_{J_n}[|Z g|^2]} \big\|_{L^2 \to L^2}
  \leq C
  \qquad \forall \, n \in \N ,
\]
meaning that the partial Fourier sums $\Fourier_{J_n} [|Z g|^2]$
of the function $|Zg|^2$ are uniformly bounded in $L^\infty(Q)$.

\subsection{The counterexample}\label{sec:MainResult}

In this subsection, we prove the following:

\begin{prop}\label{prop:MainResult}
  There exists a Bessel vector $g \in \HH^1(\R)$ such that the series
  defining Janssen's representation of the frame operator $S = S_g = S_{\Z \times \Z, g, g}$
  associated to $g$ is \emph{not} unconditionally convergent in the strong operator topology.
\end{prop}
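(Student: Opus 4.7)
My approach is a contrapositive via the reduction developed in Section~\ref{sec:FourierSeriesConnection}: if Janssen's representation of $S_g$ converges unconditionally in the strong operator topology, then for every nested exhausting sequence $(J_n)_{n\in\N}$ of finite subsets of $\Z^2$ (i.e.\ $J_n\subset J_{n+1}$ and $\bigcup_n J_n=\Z^2$), the partial Fourier sums $\Fourier_{J_n}[|Zg|^2]$ must be uniformly bounded in $L^\infty(Q)$. It therefore suffices to construct a Bessel vector $g\in\HH^1(\R)$ together with \emph{some} such nested exhausting sequence for which this uniform bound fails. By the Zak-transform characterizations \eqref{eq:BesselSequenceZakCharacterization} and \eqref{eq:FatH1ZakCharacterization}, the task is equivalent (after applying the inverse Zak transform to recover $g$) to constructing a quasi-periodic function $\Phi\in W^{1,2}_{\mathrm{loc}}(\R^2)\cap L^\infty(\R^2)$ whose $\Z^2$-periodic modulus square $F:=|\Phi|^2$ has partial Fourier sums unbounded in $L^\infty(Q)$ along some nested exhausting sequence.

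For $F$ I would use (the $\Z^2$-periodic extension of) the borderline Sobolev function $u$ from \eqref{eq:NonContinuousSobolevFunction}, which belongs to $W^{1,2}(\R^2)\cap L^\infty(\R^2)$, is non-negative, and is oscillatorily essentially discontinuous at $x_0$ because $\sin(u_0(x))$ oscillates between $-1$ and $+1$ infinitely often as $x\to x_0$. To lift $F$ to a quasi-periodic function of matching regularity, I set
\[
  \Phi(x,\omega):=\sqrt{F(x,\omega)}\cdot e^{2\pi i\omega\lfloor x\rfloor},
\]
whose unimodular second factor satisfies the quasi-periodicity relations \eqref{eq:QuasiPeriodicity} and has discontinuities only on $\Z\times\R$, where $F$ vanishes. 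A small preparatory adjustment in \eqref{eq:NonContinuousSobolevFunction}---for example, replacing $\varphi$ by $\varphi_0^2$ with $\varphi_0\in C_c^\infty((0,1)^2)$---lets me rewrite $\sqrt{u}=\sqrt{2}\,\varphi_0\cdot|\sin(u_0/2+\pi/4)|$, which lies in $W^{1,2}_{\mathrm{loc}}$ because the last factor is an absolute value of a Lipschitz function composed with $u_0\in W^{1,2}_{\mathrm{loc}}$. Together with the vanishing of $\sqrt{u}$ near $\partial(0,1)^2$ (which absorbs the jumps of $e^{2\pi i\omega\lfloor x\rfloor}$), this yields $\Phi\in W^{1,2}_{\mathrm{loc}}(\R^2)\cap L^\infty(\R^2)$, and the inverse Zak transform produces a $g\in\HH^1(\R)\cap\calB_{\Z\times\Z}$ with $|Zg|^2=F$.

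The main obstacle is the last step: exhibiting a nested exhausting sequence $(J_n)$ for which $\|\Fourier_{J_n}[F]\|_{L^\infty(Q)}\to\infty$. My preferred route is a Banach--Steinhaus argument applied to the rectangular partial-sum operators $P_n\colon X\to L^\infty(\T^2)$ acting on $X:=W^{1,2}(\T^2)\cap L^\infty(\T^2)$: by exhibiting test functions that concentrate on progressively finer scales one shows that $\sup_n\|P_n\|_{X\to L^\infty}=\infty$ (this inherits the logarithmic blow-up of the Dirichlet kernel), and Banach--Steinhaus then produces an $F_0\in X$ whose rectangular partial Fourier sums are unbounded in $L^\infty$. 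A harmless modification ($F:=|F_0|+c$ for a sufficiently large constant $c$, so that $F\geq 0$, $F\in X$, and the partial sums still diverge up to a bounded perturbation) yields an admissible target, and the construction of $\Phi$ in the previous paragraph applies mutatis mutandis. A more hands-on alternative is a Dirichlet-kernel calculation at $x_0$ for the explicit $u$ of \eqref{eq:NonContinuousSobolevFunction}: since $|\nabla u_0|\sim 1/(r\ln(1/r))$ as $r=|x-x_0|\to 0$, the local frequency of $\sin u_0$ blows up, and one can show that for suitable points $x_n\to x_0$ the values $P_n F(x_n)$ pick up uncancelled high-frequency contributions and diverge. Either route, combined with the reduction of Section~\ref{sec:FourierSeriesConnection}, delivers the desired contradiction and hence the proposition.
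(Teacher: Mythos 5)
Your reduction (unconditional SOT convergence $\Rightarrow$ uniform $L^\infty$-bounds for the partial Fourier sums of $|Zg|^2$ along every nested exhaustion) and your Zak-transform construction of the window are essentially the paper's: the paper also takes the borderline Sobolev function $u$, multiplies by the quasi-periodic unimodular factor (using the compact support of $u$ in $(0,1)^2$ to absorb the jumps of $e^{2\pi i\omega\lfloor x\rfloor}$), and invokes \eqref{eq:BesselSequenceZakCharacterization} and \eqref{eq:FatH1ZakCharacterization}; your variant with $\sqrt{F}$, justified via $1+\sin u_0=2\sin^2(u_0/2+\pi/4)$ and the Lipschitz chain rule, is a harmless cosmetic change (the paper simply works with $|Zg|^2=F^2$). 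The genuine gap is in your final step, the one that actually produces divergence. Route (a) rests on the unproved claim that $\sup_n\|P_{J_n}\|_{X\to L^\infty}=\infty$ for rectangular partial sums on $X=W^{1,2}(\T^2)\cap L^\infty(\T^2)$; this does not ``inherit'' the Dirichlet-kernel blow-up, because the standard extremal test functions are bounded in $L^\infty$ but not in $W^{1,2}$, and Cauchy--Schwarz even caps the growth at $\|P_N F\|_\infty\lesssim\sqrt{\log N}\,\|F\|_{W^{1,2}}$, so any such divergence would need a genuinely new construction. Moreover, the ``harmless modification'' $F:=|F_0|+c$ is not harmless: taking the absolute value scrambles the Fourier coefficients, so divergence of the partial sums of $F_0$ gives no control on those of $|F_0|$; and the resulting $F$ no longer vanishes near $\partial(0,1)^2$, which destroys the quasi-periodic lift (that vanishing is exactly what makes $\sqrt{F}\,e^{2\pi i\omega\lfloor x\rfloor}$ lie in $W^{1,2}_{\rm loc}$), and restoring it with a cutoff again changes the partial sums uncontrollably. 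Route (b) is only a heuristic; no estimate showing $P_nF(x_n)\to\infty$ is given.

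The missing idea is that you are free to choose the exhaustion, so no Dirichlet-kernel or operator-norm analysis of rectangular sums is needed at all. The paper argues: since $|Zg|^2$ is bounded but essentially discontinuous at $x_0$, its Fourier coefficients $(c_\alpha)$ cannot lie in $\ell^1(\Z^2)$ (otherwise the Fourier series would converge uniformly to a continuous representative); hence $\sum_\alpha|\Re c_\alpha|=\infty$ or $\sum_\alpha|\Im c_\alpha|=\infty$, and by the Riemann rearrangement theorem there is an enumeration $(\alpha_n)$ of $\Z^2$ with $|\sum_{n\le N}\Re c_{\alpha_n}|\to\infty$. Taking $J_N=\{\alpha_1,\dots,\alpha_N\}$ and evaluating the trigonometric polynomial $\Fourier_{J_N}[|Zg|^2]$ at the point $0$ gives $|\Fourier_{J_N}[|Zg|^2](0)|=|\sum_{n\le N}c_{\alpha_n}|\to\infty$, contradicting the uniform bound from your own reduction. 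With this replacement for your last paragraph (and noting that the discontinuity of $F$, which you constructed but never used, is exactly what drives it), your argument closes; as written, it does not.
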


To prove the proposition, we consider the function $F := u : (0,1)^2 \to [0,\infty)$
introduced in \Cref{eq:NonContinuousSobolevFunction}.
The properties of $F$ that we need are the following:
\begin{enumerate}
  \item $F$ has compact support in $(0,1)^2$,
        say $\supp F \subset (\delta, 1-\delta)^2$
        for some $\delta \in (0,\frac{1}{2})$.
  \item $F$ is bounded, but discontinuous at $x_0 \in (0,1)^2$
        (even after adjusting $F$ on a set of measure zero).
  \item $F \in W^{1,2}\bigl( (0,1)^2 \bigr)$.
\end{enumerate}
We now extend $F$ by zero to $[0,1)^2$ and then extend $1$-periodically in both coordinates to $\R^2$. 
Thanks to the compact support of $F$, it is easy to see $F \in W^{1,2}_{{\rm loc}}(\R^2)$.

Furthermore, we consider the function
\[
  G_0 : \quad
  \R^2 \to \CC, \quad
  (x,\omega) \mapsto e^{2 \pi i \lfloor x \rfloor \omega} ,  
\]
where for each $x\in\R$, $\lfloor x \rfloor \in \Z$
denotes the unique integer such that $x \in \lfloor x \rfloor + [0,1)$.
It is then straightforward to verify that $G_0$ is quasi-periodic
(see \Cref{eq:QuasiPeriodicity}), i.e., $G_0(x+m,\omega) = e^{2 \pi i m \omega} G_0(x,\omega)$
and $G_0(x,\omega+m) = G_0(x,\omega)$ for $x,\omega \in \R$ and $m \in \Z$.
Since $F$ is $1$-periodic in both coordinates,
it is easy to see that $F \cdot G_0$ is quasi-periodic as well.

Finally, we choose a smooth function $\psi : \R \to \R$ satisfying
$\psi(x) = n$ for all $x \in n + [\delta,1-\delta]$ with $n \in \Z$,
and define
\[
  G : \quad
  \R^2 \to \CC, \quad
  (x,\omega) \mapsto e^{2 \pi i \psi(x) \omega} .
\]
Using that $F(x,\omega) = 0$ for $n \in \Z$ and $x \in [n,n+1] \setminus (n +\delta,n +1-\delta)$,
it is easy to check $F \cdot G_0 = F \cdot G$, so that
$H := F \cdot G \in W^{1,2}_{{\rm loc}}(\R^2) \subset L^2_{{\rm loc}}(\R^2)$
is quasi-periodic.

Since the Zak transform $Z : L^2(\R) \to L^2( (0,1)^2 )$ is unitary,
there exists a unique function $g \in L^2(\R)$ such that $(Z g)|_{(0,1)^2} = H|_{(0,1)^2}$.
Since both $Z g$ and $H$ are quasi-periodic, this implies $Z g = H$ almost everywhere.
Since $H \in W^{1,2}_{{\rm loc}}(\R^2)$ is bounded,
\Cref{eq:BesselSequenceZakCharacterization,eq:FatH1ZakCharacterization} show that
$g \in \HH^1$ is a Bessel vector.
Let us assume towards a contradiction that Janssen's representation of the frame operator
associated to $g$ converges unconditionally in the strong operator topology.

Note that $|Z g|^2 = |H|^2 = F^2$ is discontinuous at $x_0 \in (0,1)^2$
(since $F$ is discontinuous there and also non-negative),
even after possibly changing $|Z g|^2$ on a null-set.
In particular, this implies that the Fourier coefficients $c_\alpha := \widehat{|Z g|^2}(\alpha)$
(for $\alpha \in \Z^2$) satisfy $c = (c_\alpha)_{\alpha \in \Z^2} \notin \ell^1(\Z^2)$,
since otherwise the Fourier series of $|Z g|^2$ would be uniformly convergent.
This implies $\sum_{\alpha \in \Z^2} |\Re c_\alpha| = \infty$
or $\sum_{\alpha \in \Z^2} |\Im c_\alpha| = \infty$.
For simplicity, we assume the first case; the second case can be treated by similar arguments.
This implies that there exists an enumeration $(\alpha_n)_{n \in \N}$ of $\Z^2$
such that $|\sum_{n=1}^N \Re c_{\alpha_n}| \to \infty$ as $N \to \infty$.
Indeed, if $\sum_{\alpha \in \Z^2} (\Re c_\alpha)_+ < \infty$
or $\sum_{\alpha \in \Z^2} (\Re c_\alpha)_- < \infty$, this is trivial
(for every enumeration); otherwise, existence of the desired
enumeration follows from the Riemann rearrangement theorem
(see e.g., \cite[Theorem~3.54]{RudinPrinciplesOfAnalysis}).

Now, define $J_n := \{ \alpha_1,\dots,\alpha_n \}$ for $n \in \N$.
We have seen in \Cref{sec:FourierSeriesConnection} that the partial Fourier sums
$\Fourier_{J_n} [|Z g|^2]$ are uniformly bounded in $L^\infty$,
say $\| \Fourier_{J_n} [|Z g|^2] \|_{L^\infty} \leq C$ for all $n \in \N$.
Since each $\Fourier_{J_n} [|Z g|^2]$ is continuous (in fact, a trigonometric polynomial),
this implies
\begin{align*}
  C
  \geq \big|\Fourier_{J_N} |Z g|^2 (0) \big|
  & =    \Big|
           \sum_{\alpha \in J_N}
             \widehat{|Z g|^2}(\alpha)
             \cdot e^{2 \pi i \langle \alpha,0 \rangle}
         \Big| \\
  & =    \Big|
           \sum_{\alpha \in J_N}
             c_\alpha
         \Big|
    \geq \Big| \Re \sum_{n=1}^N c_{\alpha_n}  \Big|
    \to \infty \text{ as } N \to \infty ,
\end{align*}
which is the desired contradiction.

\subsection{Conditional divergence of Janssen's representation in the operator norm}
\label{sec:OperatorNormNonConvergence}

We showed above that \emph{unconditional} convergence of Janssen's representation
\eqref{eq:JannssenRepresentation} in the strong operator topology fails
for some Bessel vector $g \in \HH^1(\R)$.
A similar argument shows that convergence in the operator norm
(with respect to \emph{any} given enumeration) also fails in general:
Using \Cref{eq:MultiplicationOperatorConnection}
(or more generally the arguments in \Cref{sec:FourierSeriesConnection}),
it is relatively easy to see that  if
for some enumeration $\Z^2 = \{ \alpha_n \colon n \in \N \}$
and $I_n := \{ \alpha_1,\dots,\alpha_n \}$, the sequence of partial sums $(S_{I_n})_{n \in \N}$
of Janssen's representation \eqref{eq:JannssenRepresentation} converges in operator norm
(not even necessarily to $S$),
then the associated sequence $(\Fourier_{I_n'} [|Z g|^2])_{n \in \N}$
of partial Fourier sums of $|Z g|^2$ is Cauchy in $L^\infty(Q)$ and thus
converges uniformly on $Q$ to a (necessarily continuous) function $\widetilde{H} : Q \to \CC$.
However, since $|Z g|^2 = |H|^2 = F^2 \in L^\infty(Q) \subset L^2(Q)$,
we know that $\Fourier_{I_n '} [|Z g|^2] \to F^2$ with convergence in $L^2(Q)$.
Hence, $F^2 = \widetilde{H}$ almost everywhere on $Q$, where $\widetilde{H}$ is continuous.
But we saw above that $F^2$ is discontinuous on $Q$, even after (possibly) changing it on a null-set.
Thus, we have obtained the desired contradiction:

\begin{prop}
There are Bessel vectors $g \in \HH^1$ for which Janssen's representation fails to converge conditionally in the operator norm.
\end{prop}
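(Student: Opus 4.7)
The plan is to reuse the same Bessel vector $g \in \HH^1(\R)$ constructed in Subsection~\ref{sec:MainResult} and to show that the discontinuity of $|Zg|^2 = F^2$ rules out operator norm convergence of Janssen's series along \emph{every} enumeration of $\Z^2$. The key tool is again the identity $S_I = Z^{-1} \circ M_{\Fourier_{I'}[|Zg|^2]} \circ Z$ from \eqref{eq:MultiplicationOperatorConnection}, combined with the two facts already recorded in Subsection~\ref{sec:FourierSeriesConnection}: the Zak transform $Z : L^2(\R) \to L^2(Q)$ is unitary, and the multiplication operator map satisfies $\|M_H\|_{L^2 \to L^2} = \|H\|_{L^\infty(Q)}$.

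Fix an arbitrary enumeration $\Z^2 = \{\alpha_n : n \in \N\}$, set $I_n := \{\alpha_1,\dots,\alpha_n\}$, and assume towards a contradiction that $(S_{I_n})_{n \in \N}$ is Cauchy in operator norm. Conjugating by $Z$ turns this into a Cauchy condition for $(M_{\Fourier_{I_n'}[|Zg|^2]})_{n \in \N}$ in operator norm, which by the isometry $\|M_H\| = \|H\|_{L^\infty}$ is equivalent to $(\Fourier_{I_n'}[|Zg|^2])_{n \in \N}$ being Cauchy in $L^\infty(Q)$. Since each $\Fourier_{I_n'}[|Zg|^2]$ is a trigonometric polynomial, hence continuous on $Q$, the uniform limit $\widetilde{H}$ belongs to $C(Q)$.

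On the other hand, $|Zg|^2 = F^2 \in L^\infty(Q) \subset L^2(Q)$ has a square-summable Fourier expansion, so along any enumeration of $\Z^2$ one has $\Fourier_{I_n'}[|Zg|^2] \to F^2$ in $L^2(Q)$; here I use that $I \mapsto I'$ is a bijection of $\Z^2$, so $(I_n')_{n \in \N}$ still exhausts $\Z^2$. Uniform convergence on the bounded set $Q$ implies $L^2$-convergence, so the two limits must agree, giving $\widetilde{H} = F^2$ almost everywhere on $Q$. This contradicts the fact, already established in Subsection~\ref{sec:MainResult}, that $F^2$ is discontinuous at $x_0 \in (0,1)^2$ even after modification on a null-set, where discontinuity of $F^2$ follows from that of $F$ together with $F \geq 0$.

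The only new ingredient beyond Subsection~\ref{sec:MainResult} is the upgrade from a uniform boundedness argument (via the Banach--Steinhaus theorem) to a direct Cauchy argument via the isometric identification $\|M_H\| = \|H\|_{L^\infty}$; the potential pitfall to watch for is ensuring that the enumeration is genuinely arbitrary and that $(I_n')_{n}$ inherits the exhaustion property, but this is automatic from the bijectivity of $I \mapsto I'$. No additional regularity or density arguments are needed beyond those already in the appendix.
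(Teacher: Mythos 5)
Your proposal is correct and follows essentially the same route as the paper: reuse the window $g$ from the unconditional-convergence counterexample, transfer operator-norm Cauchyness of the partial sums through the identity $S_I = Z^{-1} \circ M_{\Fourier_{I'}[|Zg|^2]} \circ Z$ and the isometry $\|M_H\| = \|H\|_{L^\infty}$ to uniform Cauchyness of the partial Fourier sums, and then contradict the essential discontinuity of $F^2$ via the $L^2$ limit. The paper's own argument is exactly this Cauchy-sequence version, so no further comment is needed.
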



However, we leave it as an open question whether Janssen's representation converges conditionally in the strong sense for Bessel vectors $g \in \HH^1$.


\section*{Acknowledgments}
D.G.\ Lee acknowledges support by the DFG Grants PF 450/6-1 and PF 450/9-1. 
F.\ Philipp was funded by the Carl Zeiss Foundation within the project \textit{DeepTurb---Deep Learning in und von Turbulenz}.
F.\ Voigtlaender acknowledges support by the German Science Foundation (DFG)
in the context of the Emmy Noether junior research group VO 2594/1-1.

\section*{Author Affiliations}
\end{document}